\documentclass[11pt,a4paper]{amsart}
\usepackage[T1]{fontenc}
\usepackage{lmodern}
\usepackage{amsmath,amssymb,amsthm}
\usepackage{microtype}
\usepackage[margin=26mm]{geometry}
\usepackage{xcolor}
\usepackage[hidelinks]{hyperref}

\newif\ifshowchanges
\showchangestrue
\newcommand{\revisioncolor}{\ifshowchanges\color{red}\fi}

\newcommand{\doi}[1]{doi:\,\href{https://doi.org/#1}{\nolinkurl{#1}}}

\newtheorem{theorem}{Theorem}[section]
\newtheorem{lemma}[theorem]{Lemma}
\newtheorem{proposition}[theorem]{Proposition}
\newtheorem{corollary}[theorem]{Corollary}
\newtheorem{conjecture}[theorem]{Conjecture}
\theoremstyle{definition}
\newtheorem{definition}[theorem]{Definition}
\newtheorem{construction}[theorem]{Construction}
\theoremstyle{remark}

\numberwithin{equation}{section}

\title
{Counterexamples to Two Conjectures on Tangles
in Matroid Minors}

\author[Zhen Chen]{Zhen Chen}
\address{School of Mathematical Sciences, Xiamen University}
\email{chenzzhen@126.com}

\author[Zhuo Li]{Zhuo Li$^{*}$}
\address{School of Mathematical Sciences, Xiamen University}
\email{lzhuo@stu.xmu.edu.cn}
\thanks{$^{*}$Corresponding author.}

\subjclass[2020]{05B35}
\keywords{Matroid, tangle, breadth, truncation, weak $4$-connectivity, internal $4$-connectivity}
\date{}

\hypersetup{
  pdftitle={Counterexamples to conjectures on matroid tangle breadth},
  pdfauthor={Zhen Chen and Zhuo Li},
  pdfsubject={Counterexamples to simultaneous breadth preservation and internal 4-connectivity}
}

\begin{document}
\begin{abstract}
We give counterexamples to two conjectures of Brettell, Jowett,
Oxley, Semple, and Whittle concerning tangles in matroid minors.
We construct a matroid $P_0$ with a tangle $\mathcal T$
of order seven. For each $i\in\{4,5,6,7\}$, let
$\mathcal T_i$ denote its order-$i$ truncation.
No proper minor $N$ of $P_0$ admits, for every
such $i$, a tangle of order $i$ whose breadth equals that of
$\mathcal T_i$, and $P_0$ is not weakly $4$-connected.
We also construct a matroid $P_1$ with a tangle
of order four and breadth six, but no internally
$4$-connected minor of $P_1$ admits a tangle of order four.

\end{abstract}
\maketitle

\section{Introduction}
\label{sec1:introduction}

The study of matroid connectivity under deletion and contraction
goes back to Tutte~\cite{Tutte66}; for general background,
see Oxley~\cite{Oxley11}. Tangles, introduced for graphs by
Robertson and Seymour~\cite{RS91}, describe connectivity by consistent
choices across separations of low order. Tangles were later
extended to matroids by Dharmatilake~\cite{Dha96}.
Their connections with matroid branch-width and
tree-decompositions were further developed by Geelen, Gerards, Robertson and
Whittle~\cite{GGRW06}. Geelen, Gerards and Whittle~\cite{GGW09}
subsequently proved decomposition theorems that distinguish maximal
tangles and related high-order tangles to grid minors in matroids
representable over a finite field. 
Tangle matroids also feature in the
work of Geelen and van Zwam~\cite{GvZ15} on preserving $3$-connectivity
and a prescribed minor under deletion or contraction.
Hall~\cite{Hall15}
characterised the matroids that arise as tangle matroids.

A related question is whether a given tangle can be retained in a minor
with stronger connectivity properties. Carmesin and Kurkofka~\cite{CK25} characterised graph tangles
of order four through connectivity properties of minors.
For matroids, Brettell, Jowett, Oxley, Semple and
Whittle~\cite{BJO25} proved that every tangle of
order at least four generates a tangle of the same order
and breadth in a weakly $4$-connected minor.
Their theorem applies separately to each truncation of
order at least four, but does not guarantee a common minor.
They conjectured that all these truncations generate
tangles of the same respective orders and breadths in
a single weakly $4$-connected minor.

\begin{conjecture}[{\cite[Conjecture~84]{BJO25}}]
\label{sec1:conjecture}
Let $\mathcal T$ be a tangle of order $k\ge4$ in a finite matroid $M$.
For $4\le i\le k$, let
\[
 \mathcal T_i=\{A\in\mathcal T:\lambda_M(A)\le i-2\}.
\]
There is a weakly $4$-connected minor $N$ of $M$ such that each
$\mathcal T_i$ generates an order-$i$ tangle $\mathcal S_i$ in $N$ and $\operatorname{breadth}_N(\mathcal S_i)
 =\operatorname{breadth}_M(\mathcal T_i).$
\end{conjecture}

We disprove this conjecture by constructing a matroid
for which no proper minor admits tangles of all the
required orders and breadths simultaneously.

\begin{theorem}\label{sec1:main}
The matroid $P_0$ in Construction~\ref{sec3:construction}
is simple of rank six but is not weakly $4$-connected.
The collection
\[
\mathcal T=\{A\subseteq E(P_0):r_{P_0}(A)<6\}
\]
is an order-seven tangle with $(P_0)_{\mathcal T}=P_0$.
For $4\le i\le7$, let $\mathcal T_i$ denote its
order-$i$ truncation.
If a minor $N$ of $P_0$ admits an order-$i$ tangle
$\mathcal S_i$ for each $i\in\{4,5,6,7\}$ such that $\operatorname{breadth}_N(\mathcal S_i)
=\operatorname{breadth}_{P_0}(\mathcal T_i),$
then $N=P_0$. Consequently,
Conjecture~\ref{sec1:conjecture} is false.
\end{theorem}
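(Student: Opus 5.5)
The plan has four steps: verify the basic properties of $P_0$, verify that $\mathcal T$ is a tangle, compute the breadths, and then rule out every proper minor. Throughout I use the definition of $P_0$ in Construction~\ref{sec3:construction}.

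First come the structural claims. Simplicity and rank six should be read off directly from the construction. To show that $P_0$ is not weakly $4$-connected, I will exhibit an explicit $3$-separation $(X,Y)$ with $\lambda_{P_0}(X)=2$ and $\min(|X|,|Y|)\ge5$. This will presumably be a designed piece of the construction, such as a small block glued along a line.

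Next I show that $\mathcal T=\{A: r(A)<6\}$ is a tangle of order seven. For the order, every $A$ with $\lambda(A)\le5$ must have $A$ or $E-A$ in $\mathcal T$. Since $r=6$, $\lambda(A)=r(A)+r(E-A)-6$, and if both sides spanned we would get $\lambda=6$; so at least one side is non-spanning. For the covering axiom, I must show that no three members of $\mathcal T$ cover $E$. The cleanest route is the standard criterion that the hyperplane-cover condition holds when no three hyperplanes of $P_0$ cover $E(P_0)$. I will check this from the construction, presumably from its size and the bound on hyperplane sizes. The non-spanning sets of $\mathcal T$ then give back the tangle matroid, so $(P_0)_{\mathcal T}=P_0$.

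The breadths then follow. Each truncation $\mathcal T_i$ consists of the non-spanning $A$ with $\lambda(A)\le i-2$. I compute $\operatorname{breadth}_{P_0}(\mathcal T_i)$ as the rank of the tangle matroid of $\mathcal T_i$: the sets in $\mathcal T_i$ are declared dependent-closure small, and breadth is the rank of $E$ in $(P_0)_{\mathcal T_i}$. The construction is presumably tuned so that these four values are distinct or at least suitably constrained, for instance each equal to some explicit number attained only with all elements present.

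The main step is the minimality claim. Tangles in a minor $N$ pull back to tangles in $M$ of the same order, and breadth cannot increase under taking minors. So it suffices to show that for every single-element deletion $P_0\backslash e$ and contraction $P_0/e$, some $i\in\{4,5,6,7\}$ has no order-$i$ tangle in the minor with the full breadth $\operatorname{breadth}(\mathcal T_i)$. I expect a split into element types by the symmetry of the construction.

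For contractions, the rank drops to $5$. An order-$7$ tangle needs breadth at least something like the order, and the breadth of $\mathcal T_7$ (likely $6$) cannot be reached since breadth is at most the rank. So $i=7$ fails for every $P_0/e$.

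For deletions, rank six is kept. Here I use the $3$-separation from the first step: deleting $e$ either destroys enough structure that any order-$7$ tangle has breadth below $6$, or else a low-order separation makes the order-$4$ or order-$5$ tangle of the required breadth impossible. For the latter, the small side of the separation becomes non-spanning relative to the tangle and drops the breadth of $\mathcal S_4$ or $\mathcal S_5$.

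This deletion case analysis is the obstacle. For it I would first try to show that any order-$i$ tangle in $P_0\backslash e$ of the right breadth induces, via the pull-back, a tangle in $P_0$ that must equal $\mathcal T_i$, and then derive a contradiction from $e$ being essential to the breadth count for some $i$.

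Finally, the counterexample to Conjecture~\ref{sec1:conjecture} follows. Any minor $N$ satisfying the conjecture's conclusion equals $P_0$, but $P_0$ is not weakly $4$-connected.
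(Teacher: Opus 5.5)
Your proposal has a genuine gap, and it starts with the definition of breadth. The breadth of a tangle of order $k$ is not the rank of its tangle matroid, which is always $k-1$. It is the maximum size of a spanning uniform restriction, $\beta_{k-1}(P_{\mathcal T})$. Since $(P_0)_{\mathcal T_i}=\operatorname{Tr}_{i-1}(P_0)$, one gets $\operatorname{breadth}_{P_0}(\mathcal T_i)=\beta_{i-1}(P_0)$, and these values are $44,42,41,37$, not $3,4,5,6$.

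Under your reading the hypothesis on $N$ would be automatic and the theorem false. For instance, $P_0\setminus f_4$ contains the $6$-uniform witness $S_6$ of size $37>15$. So its nonspanning sets form an order-seven tangle whose truncations have tangle matroids of ranks $3,4,5,6$. Consequently your deletion step (``any order-$7$ tangle has breadth below $6$'') can never succeed. Your contraction step reaches the right conclusion for a different reason than the one you give. The claim ``breadth is at most the rank'' is false, since $\operatorname{breadth}_{P_0}(\mathcal T_7)=37$. The correct reason is that $P_0/e$ has rank five, while a tangle of order $k$ forces $r\ge k-1$ (Lemma~\ref{sec2:lem-rank-comparison}). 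So $P_0/e$ has no order-seven tangle at all.

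The missing idea is the one that carries the whole theorem, and the $3$-separation plays no role in it. That separation is just the line $F$, with $\lambda_{P_0}(F)=2+6-6=2$, and it is used only for non-weak-$4$-connectivity. You need two ingredients.

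First, take any minor $N$ and any tangle $\mathcal S$ of order $s+1$ in $N$. Then $r_{N_{\mathcal S}}\le\min\{s,r_N\}$, and $r_N(X)\le r_{P_0}(X)$ for $X\subseteq E(N)$. So a spanning uniform restriction of $N_{\mathcal S}$ is an $s$-uniform witness of $P_0$ inside $E(N)$, giving $\operatorname{breadth}_N(\mathcal S)\le\beta_s(P_0|E(N))$. This treats deleted and contracted elements alike, with no pull-back of tangles and no reduction to single-element minors.

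Second, and this is the real work, you must compute $\beta_s(P_0)$ exactly and show that every element is forced. Concretely, $S_3=G\cup\{f_4,f_5\}$ is the unique maximum $3$-uniform witness. Every maximum $s$-uniform witness for $s=4,5,6$ meets $F$ in exactly $\{f_1\}$, $\{f_2\}$, $\{f_3\}$, respectively. Proving this needs explicit witnesses $S_s$, verified through the description of all linear relations on $E_0$. It also needs matching upper bounds from the small dependent sets $\{\xi_j\}\cup B_{j,h}$, $B_{j,1}\cup B_{j,2}$, $\{f_j,\xi_j,\eta_j\}$, $\{f_j,\eta_j\}\cup B_{j,h}$, $\{f_a\}\cup D_{a,h}$, $D_{a,1}\cup D_{a,2}$, and those through two points of $F$. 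Then for $e\notin E(N)$ and $s=s(e)$ one gets $\beta_s(P_0)\le\beta_s(P_0\setminus e)=\beta_s(P_0)-1$, a contradiction.

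Your closing sentence gestures at ``$e$ being essential to the breadth count'', but the proposal has no mechanism for proving it. Similarly, your (T3) argument via hyperplane sizes is left unexecuted and is unnecessary. Every nonspanning set meets $S_6$ in at most five elements, and $|S_6|=37>15$, so no three nonspanning sets cover $E(P_0)$.
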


The matroid $P_0$ is obtained by truncating a rational
vector matroid to rank six. We show that deleting any
element decreases the maximum size of an $s$-uniform
witness for some $s\in\{3,4,5,6\}$, while the rank
requirement for an order-seven tangle rules out nonempty
contractions in any minor preserving the prescribed breadths.

Brettell et al.~\cite{BJO25} also conjectured that
an internally $4$-connected minor could be obtained
while retaining at least half the breadth of the
original tangle.

\begin{conjecture}[{\cite[Conjecture~85]{BJO25}}]
\label{sec1:conjecture-internal}
Let $\mathcal T$ be a tangle of order $k\ge4$ and breadth $m$
in a matroid $M$. Then $M$ has an internally $4$-connected
minor $N$ in which $\mathcal T$ generates a tangle $\mathcal T'$
of order $k$ satisfying
$\operatorname{breadth}_N(\mathcal T')\ge m/2$.
\end{conjecture}

The conjecture fails already for $k=4$.

\begin{theorem}\label{sec1:main-internal}
The matroid $P_1$ in Construction~\ref{sec5:construction}
has a tangle of order four and breadth six, but no
internally $4$-connected minor of $P_1$ admits a tangle
of order four. Consequently,
Conjecture~\ref{sec1:conjecture-internal} is false.
\end{theorem}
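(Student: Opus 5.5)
The plan is to build $P_1$ around a single certificate. On one side, $P_1$ has a tangle of order four whose breadth we can compute exactly. On the other side, every internally $4$-connected minor of $P_1$ is small enough, or structured enough, that it carries no order-four tangle at all.

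First, the existence half. Following the pattern of Theorem~\ref{sec1:main}, take $P_1$ of rank $r$ and use the rank tangle $\mathcal T=\{A\subseteq E(P_1):r_{P_1}(A)<r\}$, restricted to sets $A$ with $\lambda(A)\le 2$.
\begin{itemize}
\item For the tangle axioms, the main one to check is that no three small sides cover $E(P_1)$. Since each small side is rank-deficient, this reduces to showing that $E(P_1)$ is not the union of three sets, each of rank less than $r$ and each with connectivity at most $2$. If $P_1$ is a truncation, as $P_0$ is, this should follow from a rank count: three such sets have total rank too small to span.
\item The breadth is the largest size of a uniform witness. I would exhibit an explicit $6$-element set $X$ on which $\mathcal T$ induces the uniform structure; presumably $X$ is independent and every small side meets it in at most a bounded number of elements.
\item For the upper bound, I would show that no $7$-element set works, using the explicit description of $P_1$.
\end{itemize}

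Second, the nonexistence half, which is where the design of $P_1$ matters. The intended design makes $P_1$ far from internally $4$-connected: for instance, a structure with many $3$-separations (triangles, triads, or a ``spike-like'' or fan-glued pattern) that is rigid enough that every internally $4$-connected minor must be very small. Concretely, I would aim to prove that every internally $4$-connected minor $N$ of $P_1$ has $|E(N)|$ below the threshold needed for an order-four tangle. Any matroid with a tangle of order four must have branch-width at least four, so it suffices to show that every internally $4$-connected minor has branch-width at most three. Such minors might be $U_{2,n}$-like, $M(K_4)$, $F_7$, or small wheels and whirls; all of these have small branch-width.

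To obtain that classification, I would show that each element $e$ of $P_1$ lies in a triangle or triad that certifies a $3$-separation in a robust way. The argument would proceed by induction over minors: if $N$ is internally $4$-connected and $N=P_1/C\backslash D$, pick an element of $E(N)$ and track its local structure (a triangle or triad, or a fan in which it sits). Either that structure survives and contradicts internal $4$-connectivity once $|E(N)|$ is large, or the minor has been cut down so far that $N$ has at most about $7$ elements. Small matroids have branch-width at most $3$ (any matroid on at most $7$ elements does, modulo a check), so no order-four tangle exists.

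The main obstacle is this uniform control over all minors, since minor operations can destroy triangles and triads. I would try first to choose $P_1$ so that $E(P_1)$ is partitioned into disjoint small blocks $B_1,\dots,B_t$, each of connectivity $2$, such that in any minor each surviving block still has $\lambda\le2$. This holds because $\lambda$ is monotone under minors. Internal $4$-connectivity then forces every surviving block to have at most $3$ elements, and since the blocks also form a path-like or tree-like arrangement of $3$-separations, it forces all but a bounded number of blocks to vanish. The tree structure yields a branch decomposition of width at most three, which kills any order-four tangle. Finally, the existence half shows $\mathcal T$ has breadth $6$, while any $N$ as in Conjecture~\ref{sec1:conjecture-internal} would need an order-four tangle. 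That contradiction finishes the proof.
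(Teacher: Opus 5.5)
Your proposal has genuine gaps. As written it is a design strategy rather than a proof: the statement concerns the specific matroid $P_1$ of Construction~\ref{sec5:construction}, and your argument never uses it. Where you do commit to a step, it fails. Your check of (T3) by a ``rank count'' cannot work, since three nonspanning sets in a rank-$r$ matroid have total rank up to $3(r-1)\ge r$; what is needed is a covering statement. For $P_1$, of rank three, the covering statement has two parts. Every set of rank at most two lies in a line. Any three lines cover at most nine of the ten elements, because the four $4$-point lines $L_i$ meet pairwise in the single points $e_{ij}$; the paper then invokes \cite[Lemma~76]{BJO25}. Lemma~\ref{sec2:lem-large-witness} is not available here, since the breadth is only $6=3(r-1)$. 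The breadth is also left unproved. The upper bound comes from double counting, $8\ge\sum_i|W\cap L_i|=2|W|-|W\cap Z|$, and $Z\cup\{e_{12},e_{34}\}$ attains it.

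The nonexistence half has the more serious gap. It rests on three things that do not hold up:
\begin{enumerate}
\item an unproved classification of internally $4$-connected minors;
\item a disjoint-block/tree scheme that does not fit $P_1$, whose $3$-separating lines overlap;
\item the false claim that matroids on seven elements have branch-width at most three. $U_{3,7}$ is internally $4$-connected (vacuously, since no partition has both sides of size at least four). Its sets of size at most two form an order-four tangle, because three of them cover at most six elements.
\end{enumerate}
The missing idea is that you need not understand internally $4$-connected minors at all. It suffices to show that $P_1$ is its only minor with an order-four tangle. Suppose $N=P_1/K\setminus D$ has such a tangle $\mathcal S$. Lemma~\ref{sec2:lem-rank-comparison} gives $r(N)\ge3$, so $K=\varnothing$, because $P_1$ is simple of rank three. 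If some $x\in D$, then $E_1\setminus\{x\}$ is covered by three sets of rank at most two. For $x=p_i$ these are the lines $L_j$ with $j\ne i$. For $x=e_{ij}$ they are $L_k$, $L_l$, and $\{p_i,p_j\}$, where $\{k,l\}=\{1,2,3,4\}\setminus\{i,j\}$. The same lemma makes their traces on $E(N)$ $\mathcal S$-small, contradicting (T3). Finally, $\lambda_{P_1}(L_1)=2$ with $|L_1|=4$ and $|E_1\setminus L_1|=6$, so $P_1$ itself is not internally $4$-connected.
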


Section~\ref{sec2:preliminaries} gives the definitions and tangle
facts used in both arguments. Section~\ref{sec3:construction-section}
constructs $P_0$ and determines its maximum witness sizes.
Section~\ref{sec4:counterexample} proves Theorem~\ref{sec1:main}.
Section~\ref{sec5:internal} constructs $P_1$ and proves
Theorem~\ref{sec1:main-internal}.

\section{Preliminaries}
\label{sec2:preliminaries}

All matroids are finite. For a matroid $P$, write $E(P)$ for its \emph{ground set}, $r_P$ for its
\emph{rank function}, and $r(P)=r_P(E(P))$ for its rank. A set $X\subseteq E(P)$
is \emph{independent} if $r_P(X)=|X|$, and it \emph{spans} $P$ if $r_P(X)=r(P)$.
A set that does not span $P$ is \emph{nonspanning}. A matroid $P$ is \emph{simple} if every subset of $E(P)$
with at most two elements is independent.
A set $X\subseteq E(P)$ is a \emph{flat} if
$r_P(X\cup\{e\})>r_P(X)$ for every $e\in E(P)\setminus X$. A rank-two flat is called a \emph{line}.

A matroid is \emph{representable over a field $\mathbb F$} if
its elements can be assigned vectors over $\mathbb F$ so that a
subset is independent exactly when the corresponding vectors are
linearly independent. The resulting matroid is called a \emph{vector matroid}.

The \emph{connectivity function} of $P$ is
\[
  \lambda_P(X)=r_P(X)+r_P(E(P)\setminus X)-r(P)
  \qquad (X\subseteq E(P)).
\]
In particular,
\[
  \lambda_P(X)=\lambda_P(E(P)\setminus X)
  \quad\text{and}\quad
  \lambda_P(X)\le r_P(X).
\]
We write $P|X$, $P\setminus D$, and $P/K$ for \emph{restriction, deletion}, and
\emph{contraction}, respectively. A matroid $N$ is a \emph{minor} of $P$
if $N=(P/K)\setminus D$ for disjoint subsets $K,D\subseteq E(P)$.
For $1\le s\le r(P)$, the \emph{rank-$s$ truncation} $\operatorname{Tr}_s(P)$ has ground set $E(P)$ and rank function
\[
  r_{\operatorname{Tr}_s(P)}(X)=\min\{s,r_P(X)\}.
\]
The \emph{uniform matroid} of rank $s$ on $m$ elements, where $0\le s\le m$, is
denoted by $U_{s,m}$; its independent sets are precisely the subsets of
size at most $s$.

\begin{definition}\label{sec2:def-witness}
Let $s$ be a positive integer. An \emph{$s$-uniform witness} in $P$ is a
set $W\subseteq E(P)$ with $|W|\ge s$ such that every subset of $W$ of size
at most $s$ is independent in $P$. Let $\beta_s(P)$ be the maximum size of
an $s$-uniform witness in $P$, with $\beta_s(P)=0$ if no such witness
exists.
\end{definition}

Let $1\le s\le r(P)$ and let $W\subseteq E(P)$ with $|W|\ge s$.
Then $W$ is an $s$-uniform witness in $P$ if and only if
\[
  \operatorname{Tr}_s(P)|W=U_{s,|W|}.
\]

\begin{definition}[\cite{RS91}]\label{sec2:def-tangle}
A \emph{tangle of order $k\ge2$} in $P$ is a collection
$\mathcal T\subseteq 2^{E(P)}$ satisfying the following axioms.
\begin{enumerate}
  \renewcommand{\labelenumi}{\textup{(T\arabic{enumi})}}
  \item $\lambda_P(A)<k-1$ for every $A\in\mathcal T$.
  \item For every $A\subseteq E(P)$ with $\lambda_P(A)<k-1$, either
    $A\in\mathcal T$ or $E(P)\setminus A\in\mathcal T$.
  \item For every $A,B,C\in\mathcal T$, not necessarily distinct,
    $A\cup B\cup C\ne E(P)$.
  \item $E(P)\setminus\{e\}\notin\mathcal T$ for every $e\in E(P)$.
\end{enumerate}
A member of $\mathcal T$ is called \emph{$\mathcal T$-small}.
\end{definition}

Axiom~(T3) implies that $E(P)\notin\mathcal T$ and that $\mathcal T$
cannot contain both a set and its complement. Consequently, whenever
$\lambda_P(A)<k-1$, exactly one of $A$ and $E(P)\setminus A$ belongs to
$\mathcal T$.

The \emph{tangle matroid} $P_{\mathcal T}$ has ground set $E(P)$ and rank
function
\begin{equation}\label{sec2:eq-tangle-rank}
  r_{P_{\mathcal T}}(X)
  =\min\Bigl(
    \{k-1\}\cup
    \{\lambda_P(A):X\subseteq A,\ A\in\mathcal T\}
  \Bigr);
\end{equation}
see \cite{BJO25}. In particular, since
$E(P)\notin\mathcal T$, no member of $\mathcal T$ contains $E(P)$, and hence
\[
  r(P_{\mathcal T})
  =r_{P_{\mathcal T}}(E(P))
  =\min\bigl(\{k-1\}\cup\varnothing\bigr)
  =k-1.
\]

For a matroid $Q$ of rank $r\ge1$, a restriction $Q|W$ is a
\emph{spanning uniform restriction} if $Q|W=U_{r,|W|}.$
The \emph{breadth} of $\mathcal T$ is the maximum size of a spanning
uniform restriction of $P_{\mathcal T}$. Thus
\[
  \operatorname{breadth}_P(\mathcal T)
  =\beta_{k-1}(P_{\mathcal T}).
\]

For $2\le i\le k$, the \emph{order-$i$ truncation} of $\mathcal T$ is
\[
  \mathcal T_i
  =\{A\in\mathcal T:\lambda_P(A)\le i-2\}.
\]
It is a tangle of order $i$, and by \cite[Lemma~83]{BJO25},
\begin{equation}\label{sec2:eq-tangle-truncation}
  P_{\mathcal T_i}
  =\operatorname{Tr}_{i-1}(P_{\mathcal T}).
\end{equation}
At $i=k$, we have
$\mathcal T_k=\mathcal T$ and
$\operatorname{Tr}_{k-1}(P_{\mathcal T})=P_{\mathcal T}$.

Let $N$ be a minor of $P$. We say that $\mathcal T$
\emph{generates} a tangle $\mathcal S$ of order $k$ in $N$
if $\mathcal S$ is the unique tangle of order $k$ in $N$ satisfying
\[
  \{A\cap E(N):A\in\mathcal T\}\subseteq\mathcal S.
\]

A \emph{$k$-separation} of $P$ is a partition
$E(P)=A\sqcup B$ with $|A|,|B|\ge k$ and $\lambda_P(A)\le k-1$.
A matroid is \emph{$k$-connected} if it has no $j$-separation
for any $j\in\{1,\ldots,k-1\}$.

A matroid $P$ is \emph{internally $4$-connected}
if it is $3$-connected and $\lambda_P(X)\ge3$
whenever $|X|,|E(P)\setminus X|\ge4$.
It is \emph{weakly $4$-connected} if it is
$3$-connected and $\lambda_P(X)\ge3$
whenever $|X|,|E(P)\setminus X|\ge5$.

The following lemma provides a rank inequality used
to bound the breadth of tangles in minors. This inequality
also follows from the quotient property of tangle matroids
\cite[Lemma~21]{BJO25}; we give a direct proof.

\begin{lemma}\label{sec2:lem-rank-comparison}
Let $\mathcal S$ be a tangle of order $k\ge3$ in a matroid $P$. Every set
$A\subseteq E(P)$ with $r_P(A)\le k-2$ belongs to $\mathcal S$. Moreover, for $X\subseteq E(P)$, 
\begin{equation}\label{sec2:eq-rank-comparison}
  r_{P_{\mathcal S}}(X)
  \le\min\{k-1,r_P(X)\}.
\end{equation}
In particular, $r(P)\ge k-1$.
\end{lemma}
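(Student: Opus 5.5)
The plan is to prove the three assertions in order: membership of low-rank sets, then the rank bound, then the bound on $r(P)$.

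\emph{Low-rank sets are small.} Let $A\subseteq E(P)$ with $r_P(A)\le k-2$. Since $\lambda_P(A)\le r_P(A)\le k-2<k-1$, axiom~(T2) gives $A\in\mathcal S$ or $E(P)\setminus A\in\mathcal S$. Suppose, for a contradiction, that $E(P)\setminus A\in\mathcal S$. I would break $A$ into pieces that are themselves small and then use (T3). Choose a basis $\{b_1,\dots,b_t\}$ of $A$, where $t\le k-2$, and induct on $r_P(A)$.
\begin{itemize}
\item[] Base case. If $r_P(A)=0$ and $A\ne\varnothing$, then $A$ consists of loops. For a loop $e$, $\lambda_P(\{e\})=0$, and (T4) forces $\{e\}\in\mathcal S$. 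A set of loops $L$ has $\lambda_P(L)=0$. Since $E(P)\setminus L$ together with $\{e\}$ would cover $E(P)$ when $L=\{e\}$, the singleton case follows from (T3).
\item[] Inductive step. Write $A=A_1\cup A_2$, where $A_2=\operatorname{cl}(\{b_t\})\cap A$ has rank at most $1$ and $A_1=A\setminus A_2$. Then $r_P(A_1)\le r_P(A)$, which does not obviously drop.
\end{itemize}

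A cleaner route is to use the standard fact that a set of rank at most $1$ is small: its connectivity is at most $1<k-1$ because $k\ge3$. If its complement were small, then together with (T4) and (T3) we would reach a contradiction. I would then handle general $A$ by submodularity. Suppose $A\notin\mathcal S$, so $B=E(P)\setminus A\in\mathcal S$. Pick $e\in A$. The set $A\setminus\{e\}$ has rank at most $k-2$, so by induction on $|A|$ it lies in $\mathcal S$, and the singleton $\{e\}$ also lies in $\mathcal S$. Now $B$, $A\setminus\{e\}$ and $\{e\}$ are three small sets covering $E(P)$, contradicting (T3).

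This induction on $|A|$ is the key idea. The base case $|A|\le1$ follows from (T2) and (T4): if $\{e\}\notin\mathcal S$, then $E(P)\setminus\{e\}\in\mathcal S$, which (T4) forbids. For $A=\varnothing$, note $\lambda_P(\varnothing)=0$; if $E(P)\in\mathcal S$ we would contradict (T3). For the singleton step we need $\lambda_P(\{e\})\le1<k-1$, which is where $k\ge3$ is used.

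\emph{The rank bound.} Given $X$, there are two cases.
\begin{itemize}
\item[] If $r_P(X)\ge k-1$, the bound $r_{P_{\mathcal S}}(X)\le k-1$ is immediate from~\eqref{sec2:eq-tangle-rank}.
\item[] If $r_P(X)\le k-2$, the first part gives $X\in\mathcal S$. Taking $A=X$ in~\eqref{sec2:eq-tangle-rank} yields $r_{P_{\mathcal S}}(X)\le\lambda_P(X)\le r_P(X)$.
\end{itemize}

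\emph{The bound on $r(P)$.} Apply the rank bound with $X=E(P)$. Since $r(P_{\mathcal S})=k-1$, we get $k-1\le r_P(E(P))=r(P)$.

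The main obstacle is the first part. The point is to induct on $|A|$ rather than on rank, since deleting one element keeps the rank at most $k-2$; once that is in place, the singleton case reduces cleanly to (T4).
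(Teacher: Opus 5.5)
Your final argument is correct and is essentially the paper's proof. It inducts on $|A|$, using (T2), (T3) and (T4) to put $\varnothing$ and every singleton in $\mathcal S$, and gets the contradiction from the cover $E(P)=(A\setminus\{e\})\cup\{e\}\cup(E(P)\setminus A)$ violating (T3); the rank bound then follows from the two cases of \eqref{sec2:eq-tangle-rank}, and $X=E(P)$ gives $r(P)\ge k-1$. The abandoned induction on rank (the loop base case and the split $A=A_1\cup A_2$) is not needed and should be deleted from the write-up.
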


\begin{proof}
Axiom~(T3) gives $E(P)\notin\mathcal S$. Since
$\lambda_P(\varnothing)=0<k-1$, axiom~(T2) then gives
$\varnothing\in\mathcal S$. For every $e\in E(P)$,
\[
  \lambda_P(\{e\})\le r_P(\{e\})\le1<k-1.
\]
Axioms~(T2) and~(T4) therefore imply that $\{e\}\in\mathcal S$.

We prove the first assertion by induction on $|A|$. The cases
$|A|\le1$ have just been established. Suppose that $|A|\ge2$ and
$r_P(A)\le k-2$, and choose $e\in A$. Since
$r_P(A\setminus\{e\})\le r_P(A)\le k-2$, the induction hypothesis gives
$A\setminus\{e\}\in\mathcal S$. If $A\notin\mathcal S$, then
\[
  \lambda_P(A)\le r_P(A)\le k-2<k-1,
\]
and axiom~(T2) gives $E(P)\setminus A\in\mathcal S$. The three members
$A\setminus\{e\}$, $\{e\}$, and $E(P)\setminus A$ of $\mathcal S$ would
then cover $E(P)$, contrary to~(T3). Hence $A\in\mathcal S$.

If $r_P(X)\le k-2$, the first assertion gives $X\in\mathcal S$, so by \eqref{sec2:eq-tangle-rank},
\[
 r_P(X)\ge \lambda_P(X) \ge  r_{P_{\mathcal S}}(X) .
\]
If $r_P(X)\ge k-1$, then $r_P(X)\ge k-1 \ge r_{P_{\mathcal S}}(X)$. This proves
\eqref{sec2:eq-rank-comparison}. Taking $X=E(P)$ gives
\[
 r(P)=r_P(E(P))\ge r_{P_{\mathcal S}}(E(P))=  k-1 .
\]
\end{proof}

\begin{corollary}\label{sec2:cor-minor-witnesses}
Let $N$ be a minor of $P$, and let $\mathcal S$ be a tangle of order
$s+1\ge3$ in $N$. If $N_{\mathcal S}|W$ is a spanning uniform restriction
of $N_{\mathcal S}$, then $W$ is an $s$-uniform witness in $P|E(N)$.
Consequently,
\begin{equation}\label{sec2:eq-minor-breadth}
  \operatorname{breadth}_N(\mathcal S)
  \le\beta_s(P|E(N)).
\end{equation}
\end{corollary}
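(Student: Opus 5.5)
The plan is to combine Lemma~\ref{sec2:lem-rank-comparison}, applied in $N$, with the fact that independence in a minor implies independence in the corresponding restriction of $P$. Write $N=(P/K)\setminus D$, so that $E(N)=E(P)\setminus(K\cup D)$. Since $\mathcal S$ has order $k=s+1\ge3$, Lemma~\ref{sec2:lem-rank-comparison} gives $r(N_{\mathcal S})=k-1=s$ and
\[
  r_{N_{\mathcal S}}(X)\le\min\{s,r_N(X)\}\qquad(X\subseteq E(N)).
\]

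Suppose $N_{\mathcal S}|W=U_{s,|W|}$, so $|W|\ge s$ and every $Y\subseteq W$ with $|Y|\le s$ is independent in $N_{\mathcal S}$. For such $Y$ we get
\[
  |Y|=r_{N_{\mathcal S}}(Y)\le r_N(Y)\le|Y|,
\]
so $Y$ is independent in $N$. Next, $r_N(Y)=r_P(Y\cup K)-r_P(K)\le r_P(Y)$ by submodularity, so $Y$ is also independent in $P$, and hence in $P|E(N)$. Thus $W$ is an $s$-uniform witness in $P|E(N)$.

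For \eqref{sec2:eq-minor-breadth}, we use $\operatorname{breadth}_N(\mathcal S)=\beta_s(N_{\mathcal S})$. This maximum is attained by some spanning uniform restriction $N_{\mathcal S}|W$, and by the above its size is at most $\beta_s(P|E(N))$. If no spanning uniform restriction exists, then the breadth is $0$ and the inequality is trivial.

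The only step requiring care is the independence transfer from the contraction $P/K$ to $P$, which is the rank inequality $r_{P/K}(Y)\le r_P(Y)$ above; the rest follows directly from the definitions.
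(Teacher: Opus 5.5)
Your proposal is correct and follows essentially the same route as the paper: apply Lemma~\ref{sec2:lem-rank-comparison} in $N$ to pass from independence in $N_{\mathcal S}$ to independence in $N$, then use the submodularity bound $r_{P/K}(Y)\le r_P(Y)$ to pass to $P$ and hence to $P|E(N)$, and finally maximize over $W$. Your fallback case is vacuous: any basis of $N_{\mathcal S}$ is already a spanning uniform restriction, so the breadth is never $0$.
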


\begin{proof}
Write $N=P/K\setminus D$, where $K\cap D=\varnothing$. For
$X\subseteq E(N)$, the contraction rank formula gives
\[
  r_N(X)=r_P(X\cup K)-r_P(K).
\]
Submodularity of the rank function gives
\[
 r_P(X\cup K)+r_P(X\cap K)\le r_P(X)+r_P(K).
\]
Since $X\cap K=\varnothing$, this implies
$r_P(X\cup K)\le r_P(X)+r_P(K)$ and hence $r_N(X)\le r_P(X)$.

Suppose that $N_{\mathcal S}|W=U_{s,|W|}$, where $|W|\ge s$.
For every $X\subseteq W$ with $|X|\le s$,
Lemma~\ref{sec2:lem-rank-comparison} gives
\[
  |X|=r_{N_{\mathcal S}}(X)
  \le r_N(X)\le r_P(X)\le |X|.
\]
Thus $r_P(X)=|X|$, so $X$ is independent in $P$. Since restriction
preserves independence, $W$ is an $s$-uniform witness in $P|E(N)$.
Maximizing $|W|$ proves \eqref{sec2:eq-minor-breadth}.
\end{proof}

\begin{lemma}\label{sec2:lem-large-witness}
Let $P$ be a matroid of rank $r\ge3$ with an $r$-uniform
witness $W$ satisfying $|W|>3(r-1)$. Then
\[
\mathcal T=\{A\subseteq E(P):r_P(A)<r\}
\]
is a tangle of order $r+1$ in $P$ and $P_{\mathcal T}=P$.
Moreover, for $4\le i\le r+1$, $\operatorname{breadth}_P(\mathcal T_i)=\beta_{i-1}(P).$
\end{lemma}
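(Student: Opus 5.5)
We verify the four tangle axioms for $\mathcal T=\{A:r_P(A)<r\}$ with $k=r+1$, then identify the tangle matroid, and finally read off the breadths from \eqref{sec2:eq-tangle-truncation}.

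\emph{The axioms.} For (T1), if $A\in\mathcal T$ then $\lambda_P(A)\le r_P(A)\le r-1<k-1=r$. For (T2), suppose $\lambda_P(A)\le r-1$ and neither $A$ nor its complement is in $\mathcal T$. Then both have rank $r$, so $\lambda_P(A)=r$, a contradiction. For (T4), every member of $\mathcal T$ has rank at most $r-1$. Since $W$ is an $r$-uniform witness with $|W|>3(r-1)\ge r$, any set of size at most $1$ has rank at most $1<r$, so its complement is spanning. Hence $E(P)\setminus\{e\}\notin\mathcal T$. Axiom (T3) carries the real content. If $A,B,C\in\mathcal T$ covered $E(P)$, then they would cover $W$. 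Each of $A\cap W$, $B\cap W$, $C\cap W$ has rank at most $r-1$ in $P$, and any subset of $W$ of size at most $r$ is independent. So each of these sets has at most $r-1$ elements, and $|W|\le 3(r-1)$, contradicting the hypothesis. The assumption $r\ge 3$ ensures that $k=r+1\ge 4\ge 2$.

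\emph{The tangle matroid.} We show $r_{P_{\mathcal T}}(X)=r_P(X)$ for all $X$ using \eqref{sec2:eq-tangle-rank} with $k-1=r$.
\begin{itemize}
\item If $r_P(X)=r$, then no member of $\mathcal T$ contains $X$, so $r_{P_{\mathcal T}}(X)=r$.
\item If $r_P(X)<r$, then Lemma~\ref{sec2:lem-rank-comparison} gives $r_{P_{\mathcal T}}(X)\le r_P(X)$.
\end{itemize}
For the reverse inequality in the second case, let $A\in\mathcal T$ with $X\subseteq A$. We need $\lambda_P(A)\ge r_P(X)$, and since $\lambda_P(A)=r_P(A)+r_P(E\setminus A)-r$ and $r_P(A)\ge r_P(X)$, it suffices to show that $E\setminus A$ is spanning. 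As above, $|A\cap W|\le r-1$, so $|W\setminus A|\ge |W|-(r-1)>2(r-1)\ge r$. Since $W$ is an $r$-uniform witness, any $r$ elements of $W\setminus A$ are independent, so $E\setminus A$ has rank $r$. Then $\lambda_P(A)=r_P(A)\ge r_P(X)$. Hence $P_{\mathcal T}=P$.

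\emph{The breadths.} For $4\le i\le r+1$, \eqref{sec2:eq-tangle-truncation} gives $P_{\mathcal T_i}=\operatorname{Tr}_{i-1}(P)$. By definition, the breadth of $\mathcal T_i$ is $\beta_{i-1}(\operatorname{Tr}_{i-1}(P))$. A set is an $(i-1)$-uniform witness in $\operatorname{Tr}_{i-1}(P)$ exactly when it is one in $P$, because truncation to rank $i-1$ preserves independence of sets of size at most $i-1$. Therefore $\operatorname{breadth}_P(\mathcal T_i)=\beta_{i-1}(P)$.

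The step needing the most care is the lower bound $\lambda_P(A)\ge r_P(X)$ in the tangle-matroid computation, and the counting $|W\setminus A|\ge r$ settles it directly.
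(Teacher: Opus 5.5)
Your proof is correct and follows essentially the same route as the paper: the bound $|A\cap W|\le r-1$ for nonspanning $A$ yields (T3) and forces complements of nonspanning sets to be spanning, so $\lambda_P(A)=r_P(A)$ and $P_{\mathcal T}=P$, and the breadths then follow from \eqref{sec2:eq-tangle-truncation}. The paper instead obtains the spanning-complement fact from (T3) applied to $A,A,E(P)\setminus A$, which is an inconsequential difference.

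The only blemish is your (T4) sentence. As written, ``rank at most $1<r$, so its complement is spanning'' does not follow. The intended justification is that $|W\setminus\{e\}|\ge|W|-1\ge r$, so $E(P)\setminus\{e\}$ contains an independent $r$-set.
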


\begin{proof}
Every nonspanning set meets $W$ in at most $r-1$ elements,
since any $r$ elements of $W$ form an independent set.
Thus, for $A,B,C\in\mathcal T$,
\[
|(A\cup B\cup C)\cap W|
\le 3(r-1)<|W|.
\]
Hence $A\cup B\cup C\ne E(P)$, proving~(T3).

For each $A\in\mathcal T$, the complement $E(P)\setminus A$
spans $P$. Otherwise, $A$, $A$, and $E(P)\setminus A$
would violate~(T3). Consequently,
\begin{equation}\label{sec2:eq-nonspanning-connectivity}
\lambda_P(A)=r_P(A)<r
\qquad (A\in\mathcal T),
\end{equation}
which proves~(T1).

To verify~(T2), suppose that $\lambda_P(A)<r$.
If $A\notin\mathcal T$, then $r_P(A)=r$, and hence
\[
r_P(E(P)\setminus A)=\lambda_P(A)<r.
\]
Thus $E(P)\setminus A\in\mathcal T$, as required.

For each $e\in E(P)$, the set $W\setminus\{e\}$ has
at least $r$ elements and therefore contains an independent
$r$-element set. It follows that $E(P)\setminus\{e\}$
spans $P$ and does not belong to $\mathcal T$.
This proves~(T4), so $\mathcal T$ is a tangle of order $r+1$.

We next prove $P_{\mathcal T}=P$ by comparing ranks.
Let $X\subseteq E(P)$ be nonspanning. Then $X\in\mathcal T$,
and every $A\in\mathcal T$ containing $X$ satisfies
\[
\lambda_P(A)=r_P(A)\ge r_P(X),
\]
with equality when $A=X$.
Since $r_P(X)<r$, the rank formula
\eqref{sec2:eq-tangle-rank} gives
$r_{P_{\mathcal T}}(X)=r_P(X)$.
If $X$ spans $P$, no member of $\mathcal T$ contains $X$,
so the same formula gives
$r_{P_{\mathcal T}}(X)=r=r_P(X)$.
Thus the two rank functions agree on every subset of $E(P)$,
and $P_{\mathcal T}=P$.

For $4\le i\le r+1$, equation~\eqref{sec2:eq-tangle-truncation}
now gives
\begin{equation}\label{sec2:eq-canonical-truncations}
P_{\mathcal T_i}
=\operatorname{Tr}_{i-1}(P_{\mathcal T})
=\operatorname{Tr}_{i-1}(P).
\end{equation}
Rank-$(i-1)$ truncation preserves the independence of every
set of size at most $i-1$. Hence $P$ and
$\operatorname{Tr}_{i-1}(P)$ have the same
$(i-1)$-uniform witnesses. By the definition of breadth,
\begin{align*}
\operatorname{breadth}_P(\mathcal T_i)
=\beta_{i-1}(P_{\mathcal T_i})
=\beta_{i-1}(\operatorname{Tr}_{i-1}(P))
=\beta_{i-1}(P).
\end{align*}
\end{proof}

\section{The construction of $P_0$}
\label{sec3:construction-section}
All vector spaces in this section are over $\mathbb Q$.

\begin{construction}\label{sec3:construction}
Set $(t_1,t_2,t_3)=(3,4,5)$. Let $V$ be a vector space
over $\mathbb Q$ with basis
\[
\begin{split}
\mathcal B={}&
\{u,v,\xi_1,\xi_2,\xi_3\}\\
&\cup
\{a_{j,h,\ell}:
  1\le j\le3,\ 1\le h\le2,\ 1\le\ell\le t_j-1\}\\
&\cup
\{d_{a,h,\ell}:
  a\in\{4,5\},\ 1\le h\le2,\ 1\le\ell\le2\}.
\end{split}
\]
All the listed basis vectors are distinct.

Define
\[
\begin{gathered}
f_1=u,\qquad f_2=v,\qquad f_3=u+v,\qquad f_4=u+2v,\qquad f_5=u+3v,
\end{gathered}
\]
and set
\[
H_0=\operatorname{span}\{u,v\},
\qquad
F=\{f_1,\ldots,f_5\}.
\]
For $j\in\{1,2,3\}$, define
\[
\eta_j=\xi_j+f_j.
\]

For each $j\in\{1,2,3\}$ and $h\in\{1,2\}$, define
\[
B_{j,h}
=
\left\{
a_{j,h,1},\ldots,a_{j,h,t_j-1},
\ \xi_j-\sum_{\ell=1}^{t_j-1}a_{j,h,\ell}
\right\}.
\]
For each $a\in\{4,5\}$ and $h\in\{1,2\}$, define
\[
D_{a,h}
=
\{d_{a,h,1},d_{a,h,2},
  f_a-d_{a,h,1}-d_{a,h,2}\}.
\]
We call each specified set $B_{j,h}$ or $D_{a,h}$
a \emph{petal}. Each $B_{j,h}$ has $t_j$ elements,
and each $D_{a,h}$ has three.

The vectors listed in $F$, in the sets
$\{\xi_j,\eta_j\}$, and in the petals are all distinct.
We form the following disjoint unions,
denoted by $\sqcup$:
\[
\begin{aligned}
A_j
&=\{\xi_j,\eta_j\}\sqcup B_{j,1}\sqcup B_{j,2}
&& (j=1,2,3),\\
D_a
&=D_{a,1}\sqcup D_{a,2}
&& (a=4,5),\\
G
&=A_1\sqcup A_2\sqcup A_3\sqcup D_4\sqcup D_5,\\
E_0
&=F\sqcup G.
\end{aligned}
\]

Let $P_{\mathrm{aux}}$ be the vector matroid with
ground set $E_0$, whose independent sets are precisely
the subsets of $E_0$ that are linearly independent in $V$.

The vectors
$f_1,f_2,\xi_1,\xi_2,\xi_3,a_{1,1,1}$
are linearly independent, so $r(P_{\mathrm{aux}})\ge6$.
Define $P_0=\operatorname{Tr}_6(P_{\mathrm{aux}}).$
Then $P_0$ has ground set $E_0$ and rank function
\[
r_{P_0}(X)
=
\min\{6,r_{P_{\mathrm{aux}}}(X)\}
\qquad (X\subseteq E_0).
\]
\end{construction}

\begin{proposition}\label{sec3:prop-basic}
The matroid $P_0$ is simple of rank six, and $F$ is a
rank-two flat of $P_0$. Every set $X\subseteq E_0$ with
$|X|\le6$ is independent in $P_0$ if and only if it is
independent in $P_{\mathrm{aux}}$.
\end{proposition}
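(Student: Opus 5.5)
The statement splits into four parts, and I will prove them in this order: the rank, the last sentence on small sets, simplicity, and finally that $F$ is a flat.

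\emph{Rank and small sets.} Construction~\ref{sec3:construction} already records $r(P_{\mathrm{aux}})\ge6$. Hence $r(P_0)=\min\{6,r(P_{\mathrm{aux}})\}=6$. For the last sentence, take $X\subseteq E_0$ with $|X|\le6$. Then
\[
r_{P_0}(X)=|X| \iff \min\{6,r_{P_{\mathrm{aux}}}(X)\}=|X| \iff r_{P_{\mathrm{aux}}}(X)=|X|,
\]
where the second equivalence uses $|X|\le6$ and $r_{P_{\mathrm{aux}}}(X)\le|X|$. So this part is immediate from the truncation rank formula.

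\emph{Simplicity.} By the small-set statement, it suffices to show that in $P_{\mathrm{aux}}$ every vector is nonzero and no two listed vectors are parallel. The construction states that the vectors are distinct, but distinctness does not exclude scalar multiples, so this needs checking. Every vector is an explicit combination of basis vectors in $\mathcal B$. I will compare supports with respect to $\mathcal B$:
\begin{itemize}
\item[(i)] The $f_a$ have support in $\{u,v\}$. They are pairwise nonparallel, since $(1,0),(0,1),(1,1),(1,2),(1,3)$ are pairwise independent.
\item[(ii)] $\xi_j$ and $\eta_j$ both involve $\xi_j$ with coefficient $1$. They differ by $f_j\neq0$, so they are not equal, and no other vector outside $A_j$ has $\xi_j$ in its support except the last element of each $B_{j,h}$.
\item[(iii)] The last element of $B_{j,h}$ has $a_{j,h,\ell}$ in its support. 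This distinguishes it from $\xi_j$ and $\eta_j$, and from the corresponding element of $B_{j,3-h}$, whose support uses different $a$'s.
\item[(iv)] The petal vectors $d_{a,h,\ell}$ and $f_a-d_{a,h,1}-d_{a,h,2}$ have supports containing private basis vectors. Any two with different supports are nonparallel, and those with the same support are equal as vectors, which the construction rules out.
\end{itemize}
This is a routine case check on supports.

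\emph{$F$ is a flat.} Since $F\subseteq H_0$ and $f_1,f_2$ are independent, $r_{P_0}(F)=2$. For flatness I must show that $e\notin H_0$ for every $e\in G$. Then $r_{P_{\mathrm{aux}}}(F\cup\{e\})=3\le6$, so the rank also increases in $P_0$. Each $e\in G$ has, in its $\mathcal B$-expansion, a nonzero coefficient on some basis vector other than $u,v$: namely $\xi_j$, some $a_{j,h,\ell}$, or some $d_{a,h,\ell}$. Hence $e\notin H_0=\operatorname{span}\{u,v\}$. This gives $r_{P_0}(F)=2$ and that $F$ is a rank-two flat.

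The only step with any content is the parallel-pair check in the simplicity part. Even there, reading off supports in the basis $\mathcal B$ settles every case, so I expect no real obstacle.
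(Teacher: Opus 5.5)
Your proposal is correct and matches the paper's proof step for step: the truncation formula gives $r(P_0)=6$ and shows that independence agrees on sets of size at most six, simplicity reduces to checking that the vectors of $P_{\mathrm{aux}}$ are nonzero and pairwise nonparallel, and $F$ is a flat because every element of $G$ lies outside $H_0$. Your support comparison in the basis $\mathcal B$ just makes explicit the step the paper leaves as ``it can be checked''. One small slip in (ii): the two elements $\xi_j-\sum_{\ell}a_{j,h,\ell}$ lie inside $A_j$, not outside it, so the vectors with $\xi_j$ in their support are exactly $\xi_j$, $\eta_j$ and these two.
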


\begin{proof}
The construction gives $r(P_{\mathrm{aux}})\ge6$, so $r(P_0)=6$.
If $X\subseteq E_0$ and $|X|\le6$, then
$r_{P_{\mathrm{aux}}}(X)\le |X|\le6$ and hence
\[
r_{P_0}(X)=\min\{6,r_{P_{\mathrm{aux}}}(X)\}
         =r_{P_{\mathrm{aux}}}(X).
\]
Thus a set of at most six elements is
independent in $P_0$ if and only if it is independent in
$P_{\mathrm{aux}}$.

To prove simplicity, it can be checked that every vector in $E_0$
is nonzero and every subset of $E_0$ of size at most two is linearly
independent. This shows that $P_{\mathrm{aux}}$ is simple, and the
preservation of independence up to size six proves that $P_0$ is simple.

The set $F$ spans $H_0$ and has rank two in $P_0$.
Adjoining any vector of $E_0\setminus F$ to $F$ increases its rank
from two to three. Thus $F$ is a flat of $P_0$.
\end{proof}

\begin{lemma}\label{sec3:lem-relations}
Let $c_x\in\mathbb Q$ for $x\in E_0$.
Then $\sum_{x\in E_0}c_xx=0$ if and only if the coefficients
are constant on each petal and satisfy
\begin{equation}\label{sec3:eq-relations}
\begin{aligned}
c_{\xi_j}+c_{\eta_j}+\alpha_{j,1}+\alpha_{j,2}&=0
    &&(j=1,2,3),\\
\sum_{j=1}^{3}c_{\eta_j} f_j
+\sum_{a=4}^{5}(\delta_{a,1}+\delta_{a,2})f_a
+\sum_{a=1}^{5}c_{f_a} f_a&=0,
\end{aligned}
\end{equation}
where $\alpha_{j,h}$ and $\delta_{a,h}$ are the common
coefficients on $B_{j,h}$ and $D_{a,h}$, respectively.
\end{lemma}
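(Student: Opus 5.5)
We expand $\sum_{x\in E_0}c_xx$ in the basis $\mathcal B$ and compare coefficients. The basis vectors fall into four groups: the petal coordinates $a_{j,h,\ell}$ and $d_{a,h,\ell}$, the vectors $\xi_j$, and the pair $u,v$ spanning $H_0$. Every element of $E_0$ is a vector with a known expansion. The elements of $F$ lie in $H_0$. We have $\xi_j$ itself, and $\eta_j=\xi_j+f_j$. The petal elements are of two kinds. The first are the basis vectors $a_{j,h,\ell}$ and $d_{a,h,\ell}$. The second are the last elements $\xi_j-\sum_\ell a_{j,h,\ell}$ of $B_{j,h}$ and $f_a-d_{a,h,1}-d_{a,h,2}$ of $D_{a,h}$. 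Denote the coefficient of the last element of $B_{j,h}$ by $\gamma_{j,h}$, and that of $D_{a,h}$ by $\epsilon_{a,h}$.

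\textbf{Petal coordinates.} The basis vector $a_{j,h,\ell}$ appears in exactly two elements of $E_0$: the element $a_{j,h,\ell}$ itself, with coefficient $+1$, and the last element of $B_{j,h}$, with coefficient $-1$. Hence the $a_{j,h,\ell}$-coordinate of the sum is $c_{a_{j,h,\ell}}-\gamma_{j,h}$. This vanishes for all $\ell$ exactly when $c$ is constant on $B_{j,h}$; call the common value $\alpha_{j,h}$. The same argument for $d_{a,h,\ell}$ shows that $c$ is constant on $D_{a,h}$, with common value $\delta_{a,h}$.

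\textbf{The $\xi_j$ coordinates.} The basis vector $\xi_j$ occurs in $\xi_j$, in $\eta_j$, and in the last elements of $B_{j,1}$ and $B_{j,2}$. So its coordinate is $c_{\xi_j}+c_{\eta_j}+\gamma_{j,1}+\gamma_{j,2}$. Using constancy on petals, this is the first equation of \eqref{sec3:eq-relations}.

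\textbf{The remaining part in $H_0$.} After these substitutions, what remains lies in $H_0$. It collects three kinds of terms: $c_{\eta_j}f_j$ from the $\eta_j$, $\epsilon_{a,h}f_a$ from the last elements of the $D_{a,h}$, and $c_{f_a}f_a$ from $F$. Since $u$ and $v$ are basis vectors disjoint from all the others, the total sum vanishes if and only if all the coordinates above vanish and this $H_0$-component equals $0$. The $H_0$-condition is exactly the second equation of \eqref{sec3:eq-relations}.

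\textbf{Converse and main obstacle.} Conversely, suppose $c$ is constant on each petal and the two displayed conditions hold. The same coordinate computation, read in reverse, shows that every basis coordinate of the sum is zero. The only real care needed is the bookkeeping: checking that no basis vector other than those listed appears in an unexpected element. In particular, the $a$- and $d$-coordinates occur only within their own petal, and $u,v$ occur only through $F$, the $\eta_j$, and the last elements of the $D_{a,h}$. This holds by the definitions in Construction~\ref{sec3:construction}, so I expect no genuine obstacle beyond organizing the expansion.
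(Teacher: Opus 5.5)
Your proposal is correct and follows essentially the same route as the paper: you expand the sum in the basis $\mathcal B$, read off constancy on each petal from the $a_{j,h,\ell}$- and $d_{a,h,\ell}$-coordinates, and split the remainder into the $\xi_j$-coordinates and an $H_0$-component whose vanishing is the second equation. The only difference is cosmetic: you compare all basis coordinates in a single pass, whereas the paper first isolates each petal to obtain constancy and then substitutes $\sigma(B_{j,h})=\xi_j$ and $\sigma(D_{a,h})=f_a$.
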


\begin{proof}
Suppose first that $\sum_{x\in E_0}c_xx=0$.
Fix a petal $B_{j,h}$, and let $q$ be the coefficient of
$\xi_j-\sum_{\ell=1}^{t_j-1}a_{j,h,\ell}$.
Expanding this vector and rearranging the relation gives
\[
\sum_{\ell=1}^{t_j-1}
(c_{a_{j,h,\ell}}-q)a_{j,h,\ell}
=
-q\xi_j-\sum_{x\in E_0\setminus B_{j,h}}c_xx.
\]
Every vector on the right is a linear combination of the
basis vectors in $\mathcal B\setminus
\{a_{j,h,1},\ldots,a_{j,h,t_j-1}\}.$
The left side is a linear combination of the omitted basis
vectors. Since $\mathcal B$ is linearly independent, the
displayed equality implies
$c_{a_{j,h,\ell}}-q=0$ for every $\ell$.
Thus the coefficients are constant on $B_{j,h}$.

Similarly, fix $D_{a,h}$ and let $q$ be the coefficient of
$f_a-d_{a,h,1}-d_{a,h,2}$. Expanding and rearranging gives
\[
(c_{d_{a,h,1}}-q)d_{a,h,1}
+(c_{d_{a,h,2}}-q)d_{a,h,2}
=
-qf_a-\sum_{x\in E_0\setminus D_{a,h}}c_xx.
\]
Both $f_a$ and every vector in $E_0\setminus D_{a,h}$
are linear combinations of the basis vectors in $\mathcal B\setminus\{d_{a,h,1},d_{a,h,2}\}.$
Linear independence of $\mathcal B$ therefore gives
$c_{d_{a,h,1}}=c_{d_{a,h,2}}=q$.
Hence the coefficients are constant on every petal.

Write $\alpha_{j,h}$ and $\delta_{a,h}$ for the common
coefficients on $B_{j,h}$ and $D_{a,h}$, respectively.
Since
\[
\sum_{x\in B_{j,h}}x=\xi_j,
\qquad
\sum_{x\in D_{a,h}}x=f_a,
\]
substituting $\eta_j=\xi_j+f_j$ yields
\[
\begin{aligned}
\sum_{x\in E_0}c_xx
&=
\sum_{j=1}^{3}
(c_{\xi_j}+c_{\eta_j}+\alpha_{j,1}+\alpha_{j,2})\xi_j\\
&\quad+
\sum_{j=1}^{3}c_{\eta_j}f_j
+\sum_{a=4}^{5}(\delta_{a,1}+\delta_{a,2})f_a
+\sum_{a=1}^{5}c_{f_a}f_a.
\end{aligned}
\]
All terms involving the vectors $f_a$ lie in
$H_0=\operatorname{span}\{u,v\}$.
Since $u,v,\xi_1,\xi_2,\xi_3$ are linearly independent,
this expression vanishes only if each coefficient of
$\xi_j$ is zero and the sum of the terms in $H_0$ is zero.
These are precisely the equations
in~\eqref{sec3:eq-relations}, proving necessity.

The converse follows from the same expansion.
\end{proof}

For a subset $Y$ of $E_0$, a petal is \emph{full in $Y$} if it is contained in $Y$, and
\emph{incomplete in $Y$} otherwise.
For $Y\subseteq E_0$, write $\sigma(Y)=\sum_{x\in Y}x$.
Thus
\[
 \sigma(B_{j,h})=\xi_j,
 \qquad \sigma(D_{a,h})=f_a.
\]
Consequently, each set in the following list is linearly dependent in
$V$. Its size and an explicit linear relation are also shown:
\begin{equation}\label{sec3:eq-dependent-sets}
\renewcommand{\arraystretch}{1.2}
\begin{array}{ccl}
 \text{Set}&\text{Size}&\text{Relation}\\ \hline
 \{\xi_j\}\cup B_{j,h}&t_j+1&
       \sigma(B_{j,h})-\xi_j=0\\
 B_{j,1}\cup B_{j,2}&2t_j&
       \sigma(B_{j,1})-\sigma(B_{j,2})=0\\
 \{f_j,\xi_j,\eta_j\}&3&
       \eta_j-\xi_j-f_j=0\\
 \{f_j,\eta_j\}\cup B_{j,h}&t_j+2&
       \eta_j-f_j-\sigma(B_{j,h})=0\\
 \{f_a\}\cup D_{a,h}&4&
       \sigma(D_{a,h})-f_a=0\\
 D_{a,1}\cup D_{a,2}&6&
       \sigma(D_{a,1})-\sigma(D_{a,2})=0
\end{array}
\end{equation}
Here $j\in\{1,2,3\}$, $a\in\{4,5\}$, and $h\in\{1,2\}$. These sets
are therefore dependent in $P_{\mathrm{aux}}$, and remain dependent
in $P_0$ because truncation cannot increase their ranks.

\subsection{Upper bounds and explicit witnesses}

Recall that an $s$-uniform witness $W$ has $|W|\ge s$ and every subset
of $W$ of size at most $s$ is independent.

\begin{lemma}\label{sec3:lem-bounds}
Let $W$ be an $s$-uniform witness in $P_0$, where
$s\in\{3,4,5,6\}$. Then $|W\cap F|\le2$, and the sizes of
$W\cap A_1,W\cap A_2,W\cap A_3,W\cap D_4,W\cap D_5$ are bounded
by the corresponding entries of Table~\ref{sec3:tab-bounds}.
In particular, $|W\cap G|\le c_s$, where $c_s$ is the sum of the five
bounds in its row.
\end{lemma}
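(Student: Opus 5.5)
The table is not shown, so the plan is to derive the natural bounds from the dependent sets in \eqref{sec3:eq-dependent-sets}; these should be the entries of Table~\ref{sec3:tab-bounds}. Since $s\ge3$ and $W$ is an $s$-uniform witness, every subset of $W$ of size at most $s$ is independent in $P_0$. By Proposition~\ref{sec3:prop-basic}, such subsets are also independent in $P_{\mathrm{aux}}$, since they have size at most $6$. Hence $W$ contains no dependent set of size at most $s$.

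The bound $|W\cap F|\le2$ is immediate. By Proposition~\ref{sec3:prop-basic}, $F$ has rank two, so any three elements of $F$ are dependent, and $3\le s$.

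For the other blocks I would treat each one separately, listing which sets of \eqref{sec3:eq-dependent-sets} have size at most $s$ and then solving a small extremal problem.

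For $D_a$, the six-element dependent set $D_{a,1}\cup D_{a,2}$ gives $|W\cap D_a|\le5$ when $s=6$. For $s<6$ I expect no constraint beyond the trivial $|D_a|=6$. Here I would check via Lemma~\ref{sec3:lem-relations} that any dependent subset of $D_a$ must contain a full petal together with its partner coefficient structure. Indeed, a relation supported in $D_a$ is constant on petals and, by \eqref{sec3:eq-relations}, requires $\delta_{a,1}+\delta_{a,2}=0$. So the minimal dependent set inside $D_a$ is exactly $D_{a,1}\cup D_{a,2}$.

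Similarly for $A_j$, Lemma~\ref{sec3:lem-relations} shows that a relation supported in $A_j$ is constant on $B_{j,1}$ and on $B_{j,2}$, has $c_{\eta_j}=0$ (the $f_j$ coefficient must vanish), and satisfies $c_{\xi_j}+\alpha_{j,1}+\alpha_{j,2}=0$. So the circuits within $A_j$ are exactly $\{\xi_j\}\cup B_{j,h}$ (size $t_j+1$) and $B_{j,1}\cup B_{j,2}$ (size $2t_j$). The bound on $|W\cap A_j|$ is then: if $t_j+1\le s$, then $W$ must miss an element of each $\{\xi_j\}\cup B_{j,h}$. The optimum is either to drop $\xi_j$, which loses one element, with a further drop if $2t_j\le s$, or to drop one element of each of the two petals. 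The entry is $2t_j+2$ minus the minimum number of deletions hitting all circuits of size at most $s$. Concretely, for $j=1$ ($t_1=3$): circuits of sizes $4,4,6$. For $j=2$: sizes $5,5,8$. For $j=3$: sizes $6,6,10$.

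The main obstacle is exactly this use of Lemma~\ref{sec3:lem-relations}. One must confirm that no other small dependent sets live inside a single block. In particular, sets involving $\eta_j$ but not $f_j$ are independent within $A_j$, because $f_j\notin A_j$. I would make this airtight by applying the lemma to relations whose coefficients vanish outside the block. Once the per-block bounds are established, $|W\cap G|\le c_s$ follows by summing over the disjoint blocks $A_1,A_2,A_3,D_4,D_5$ that partition $G$, since the bound on each intersection holds independently.
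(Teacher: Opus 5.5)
Your proposal is correct and follows essentially the paper's route. Both arguments exclude from $W$ the dependent sets $\{\xi_j\}\cup B_{j,h}$, $B_{j,1}\cup B_{j,2}$ and $D_{a,1}\cup D_{a,2}$ of size at most $s$, and your minimum-hitting-set count is the paper's case split on whether $\xi_j\in W$; your counting reproduces every entry of Table~\ref{sec3:tab-bounds}. The step you flag as the main obstacle is not needed, because extra dependent sets inside a block could only lower the maximum, so the upper bounds require only that the listed sets are dependent (your circuit characterization via Lemma~\ref{sec3:lem-relations} is nonetheless correct).
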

\begin{table}[htbp]
\centering
\caption{Upper bounds on the five intersections of an $s$-uniform witness.}
\label{sec3:tab-bounds}
\renewcommand{\arraystretch}{1.15}
\begin{tabular}{c|rrrrr|r}
 $s$&$|W\cap A_1|$&$|W\cap A_2|$&$|W\cap A_3|$&$|W\cap D_4|$&$|W\cap D_5|$&$c_s$\\ \hline
 3&$\le 8$&$\le 10$&$\le 12$&$\le 6$&$\le 6$&42\\
 4&$\le 7$&$\le10$&$\le12$&$\le6$&$\le6$&41\\
 5&$\le7$&$\le9$&$\le12$&$\le6$&$\le6$&40\\
 6&$\le6$&$\le9$&$\le11$&$\le5$&$\le5$ &36
\end{tabular}
\end{table}
\begin{proof}
Since $r_{P_0}(F)=2$, every three elements of $F$ are dependent.
Thus $|W\cap F|\le2$ for every $s\in\{3,4,5,6\}$. 

Recall that $(t_1,t_2,t_3)=(3,4,5)$. For $s=3$, the bounds follow from
$|W\cap A_j|\le |A_j|=2+2t_j$ for $j=1,2,3$ and
$|W\cap D_a|\le |D_a|=6$ for $a=4,5$. Suppose that $t_j+1\le s$. If $\xi_j\in W$, neither $B_{j,1}$ nor
$B_{j,2}$ can be full in $W$: otherwise $W$ would contain the dependent
set $\{\xi_j\}\cup B_{j,h}$ of size at most $s$. Each petal then
contains at most $t_j-1$ elements of $W$, and at most two elements
of $\{\xi_j,\eta_j\}$ are in $W$. If $\xi_j\notin W$, at most one element
of $\{\xi_j,\eta_j\}$ remains, while each petal contains at most
$t_j$ elements of $W$. Hence
\[
 |W\cap A_j|\le
 \begin{cases}
  2+2(t_j-1)=2t_j,&\xi_j\in W,\\
  1+2t_j,&\xi_j\notin W.
 \end{cases}
\]
For $j=1$, this gives $|W\cap A_1|\le 7$ whenever $s\ge4$; for $j=2$, it gives
$|W\cap A_2|\le 9$ whenever $s\ge5$; for $j=3$, it gives $|W\cap A_3|\le 11$ when $s=6$.

When $s=6$, the upper bound on $|W\cap A_1|$ improves to six. If $\xi_1\in W$,
the preceding inequality already gives six. If $\xi_1\notin W$,
the dependent six-element set $B_{1,1}\cup B_{1,2}$ cannot be
contained in $W$. At most five of its elements can therefore be
retained, and adding $\eta_1$ gives at most six. Likewise, the
dependent six-element set $D_{a,1}\cup D_{a,2}$ gives
$|W\cap D_a|\le5$ for $a=4,5$.
The remaining entries follow from the sizes of the sets. Since $G=A_1\sqcup A_2\sqcup A_3\sqcup D_4\sqcup D_5$, adding their bounds gives $c_s$.
\end{proof}

\begin{proposition}\label{sec3:prop-witnesses}
Define
\begin{equation}\label{sec3:eq-witnesses}
\begin{aligned}
 S_3&=G\cup\{f_4,f_5\},\\
 S_4&=(G\setminus\{\xi_1\})\cup\{f_1\},\\
 S_5&=(G\setminus\{\xi_1,\xi_2\})\cup\{f_2\},\\
 S_6&=\bigl(G\setminus
       \{\xi_1,\xi_2,\xi_3,a_{1,2,1},d_{4,2,1},d_{5,2,1}\}\bigr)
       \cup\{f_3\}.
\end{aligned}
\end{equation}
For each $s\in\{3,4,5,6\}$, the set $S_s$ is an $s$-uniform witness
in $P_0$, and
\[
 (|S_3|,|S_4|,|S_5|,|S_6|)=(44,42,41,37).
\]
\end{proposition}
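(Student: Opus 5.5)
We prove both assertions of the proposition directly. The sizes are a count, and for the witness property we show that no dependent set of size at most $s$ lies inside $S_s$, using Lemma~\ref{sec3:lem-relations}.

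\emph{Sizes.} Each $A_j$ has $2+2t_j$ elements, giving $8,10,12$, and $|D_4|=|D_5|=6$. Hence $|G|=42$. The four definitions in \eqref{sec3:eq-witnesses} then give
\[
|S_3|=42+2=44,\quad |S_4|=42-1+1=42,\quad |S_5|=42-2+1=41,\quad |S_6|=42-6+1=37.
\]

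\emph{Setting up the witness check.} By Proposition~\ref{sec3:prop-basic}, it suffices to work in $P_{\mathrm{aux}}$. Suppose, for contradiction, that $S_s$ contains a circuit $X$ of $P_{\mathrm{aux}}$ with $|X|\le s$. Take a linear relation $\sum c_xx=0$ whose support is exactly $X$. Since the coefficients are constant on each petal, every petal meeting $X$ is full in $X$, and the equations \eqref{sec3:eq-relations} hold. We read these equations as a cost count.

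\emph{Costs from the $\xi_j$-equations.} If $c_{\eta_j}\ne0$, the $\xi_j$-equation forces either $\xi_j\in X$ or some petal $B_{j,h}\subseteq X$. So $\eta_j$ can be used at cost $2$ when $\xi_j$ is present, and otherwise at cost $1+t_j$ or more.

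\emph{Costs from the $H_0$-equation.} Each $f_a$ in $X$ costs $1$. A single petal $D_{a,h}$ contributes $\delta f_a$ at cost $3$. Since any two of $f_1,\dots,f_5$ are linearly independent, a nonzero $H_0$-part of the relation needs contributions in at least three distinct directions $f_a$. Two contributions in the same direction can only cancel, which is at least as expensive.

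\emph{Components with zero $H_0$-contribution.} These are:
\begin{itemize}
\item $\{\xi_j\}\cup B_{j,h}$, of size $t_j+1$;
\item $B_{j,1}\cup B_{j,2}$, of size $2t_j$;
\item $D_{a,1}\cup D_{a,2}$, of size $6$;
\item $\{\xi_j,\eta_j,f_j\}$, of size $3$, which requires $f_j\in X$.
\end{itemize}

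Each such component, and each minimal combination of three directions, must be checked against the set's contents. We go through the four sets in turn.

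\emph{The case $s=3$.} The elements $f_1,f_2,f_3$ are absent, so $\{\xi_j,\eta_j,f_j\}$ is unavailable. The remaining zero components have size at least $4$. Three directions cost at least $2+1+1=4$, using for example $\eta_j,\xi_j,f_4,f_5$. Thus every circuit in $S_3$ has more than $3$ elements.

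\emph{The case $s=4$.} Here $\xi_1$ is absent, so $\eta_1$ costs at least $4$, and $\{\xi_1\}\cup B_{1,h}$ is unavailable. The only $f$ present is $f_1$. Any triple of directions is therefore built from $\eta_j$'s (cost $2$ for $j=2,3$, at least $4$ for $j=1$), $D$-petals (cost $3$) and $f_1$ (cost $1$). The cheapest such triple already exceeds $4$. For example, $\{\xi_2,\eta_2\}\cup\{\xi_3,\eta_3\}\cup\{f_1\}$ has size $5$. The zero components available are $\{\xi_2\}\cup B_{2,h}$ of size $5$ and $B_{1,1}\cup B_{1,2}$ of size $6$, both larger than $4$.

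\emph{The case $s=5$.} This is analogous. Now $\eta_1$ costs at least $4$ and $\eta_2$ at least $5$, and the only $f$ present is $f_2$. The cheapest triple is $\{\xi_3,\eta_3,f_2\}$ together with a $D$-petal, of size $6$. The zero components available are $\{\xi_3\}\cup B_{3,h}$ of size $6$, together with the pairs of petals. All of these exceed $5$.

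\emph{The case $s=6$.} The removed elements $a_{1,2,1}$, $d_{4,2,1}$ and $d_{5,2,1}$ make the petals $B_{1,2}$, $D_{4,2}$ and $D_{5,2}$ unusable. This kills the size-$6$ components $B_{1,1}\cup B_{1,2}$ and $D_{a,1}\cup D_{a,2}$. All $\xi_j$ are gone, so:
\begin{itemize}
\item $\eta_1,\eta_2,\eta_3$ cost $4,5,6$ respectively;
\item $f_3$ costs $1$;
\item $D_{4,1}$ and $D_{5,1}$ cost $3$ each.
\end{itemize}
The cheapest three distinct directions are $f_3,D_{4,1},D_{5,1}$, of total size $7$. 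The surviving zero components $B_{2,1}\cup B_{2,2}$ and $B_{3,1}\cup B_{3,2}$ have sizes $8$ and $10$.

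Hence in every case no circuit of size at most $s$ lies in $S_s$, so each $S_s$ is an $s$-uniform witness in $P_0$.

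\emph{Main obstacle.} The delicate part is justifying that a minimal support decomposes into these costed pieces. A general relation might mix several components, for instance combining $\eta_j$ with both petals of $A_j$ using nonzero $\alpha$'s. The plan is to argue that any circuit is either a single zero-contribution component or has at least three nonzero directions in the $H_0$-equation. Any such mixture then has size at least that of the cheapest allowed configuration, and these minima are exactly the bounds used above.
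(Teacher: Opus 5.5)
\paragraph{Overall.} Your outline is the same as the paper's: a full-support relation on a small dependent set $X$, petal-constancy from Lemma~\ref{sec3:lem-relations}, then ``one block'' versus ``at least three directions $f_a$'', with per-direction costs. Your cost minima $4,5,6,7$ are right. However, there is a genuine gap in two places.

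\paragraph{The dichotomy is announced, not proved.} The step you call the main obstacle is the heart of the proof. The missing device is to split $S_s$ into five disjoint blocks: $C_a=S_s\cap(A_a\cup\{f_a\})$ for $a\le3$, and $C_a=S_s\cap(D_a\cup\{f_a\})$ for $a=4,5$. Use $\sigma(B_{a,h})=\xi_a$, $\sigma(D_{a,h})=f_a$ and the $\xi_a$-equation. Then block $a$ contributes exactly $\rho_af_a$, where $\rho_a=c_{\eta_a}+c_{f_a}$ for $a\le3$ or $\rho_a=\delta_{a,1}+\delta_{a,2}+c_{f_a}$ for $a=4,5$, and $\sum_a\rho_af_a=0$.
\begin{itemize}
\item Suppose $X$ meets two or more blocks. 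Minimality of $X$ forces $\rho_a\ne0$ on every block it meets; otherwise $X\cap C_a$ is a dependent proper subset.
\item Pairwise independence of the $f_a$ then forces at least three blocks. Only at this point do your costs apply.
\end{itemize}
Without minimality, you can instead split on whether some $\rho_a$ is nonzero.

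\paragraph{The single-block list is incomplete.} Suppose $X$ lies in one block. Split on whether $c_{\eta_a}$ and $c_{\xi_a}$ vanish (for $a\le3$), or on whether $c_{f_a}$ vanishes (for $a=4,5$). This shows that $X$ contains one of the six sets in \eqref{sec3:eq-dependent-sets}. Your list omits two of them: $\{f_j,\eta_j\}\cup B_{j,h}$ and $\{f_a\}\cup D_{a,h}$. These are exactly the tight circuits, each of size $s+1$:
\begin{itemize}
\item $\{f_4\}\cup D_{4,h}\subseteq S_3$;
\item $\{f_1,\eta_1\}\cup B_{1,h}\subseteq S_4$;
\item $\{f_2,\eta_2\}\cup B_{2,h}\subseteq S_5$;
\item $\{f_3,\eta_3\}\cup B_{3,h}\subseteq S_6$.
\end{itemize}
None of these is checked in your cases; for $s=6$ you list only the two $B$-unions. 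Your remark that same-direction cancellation ``is at least as expensive'' does not cover them. It holds only because of the chosen petal sizes: with two-element $D$-petals, $\{f_4\}\cup D_{4,h}$ would be a dependent triple in $S_3$.

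The statement survives, but only once these two steps are actually carried out.
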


\begin{proof}
It can be checked that
$(|S_3|,|S_4|,|S_5|,|S_6|)=(44,42,41,37)$.
Fix $s\in\{3,4,5,6\}$. By
Proposition~\ref{sec3:prop-basic}, it remains to show that
every subset of $S_s$ of size at most $s$ is linearly
independent in $V$.

Suppose otherwise. Choose a linearly dependent set
$X\subseteq S_s$ of minimum cardinality among those with
$|X|\le s$. Then every proper subset of $X$ is linearly
independent. Choose coefficients $c_x\in\mathbb Q$, not all
zero, such that $\sum_{x\in X}c_xx=0$.
Every $c_x$ is nonzero; otherwise, the vectors with nonzero
coefficients would form a linearly dependent proper subset
of $X$. Extend the coefficients to $E_0$ by setting
$c_x=0$ for $x\notin X$. Thus
$\sum_{x\in E_0}c_xx=\sum_{x\in X}c_xx=0$.

Define
\[
C_a=
\begin{cases}
S_s\cap(A_a\cup\{f_a\}),&a=1,2,3,\\
S_s\cap(D_a\cup\{f_a\}),&a=4,5.
\end{cases}
\]
These five sets partition $S_s$. For $a=1,\ldots,5$, let
$L_a=\sum_{x\in X\cap C_a}c_xx$.
The sets $X\cap C_a$ are pairwise disjoint and have union
$X$, so $\sum_{a=1}^{5}L_a=\sum_{x\in X}c_xx=0$.

By Lemma~\ref{sec3:lem-relations}, the coefficients are
constant on each petal. Let $\alpha_{a,h}$, for $a=1,2,3$,
and $\delta_{a,h}$, for $a=4,5$, denote the common
coefficients on $B_{a,h}$ and $D_{a,h}$, respectively.
If a common coefficient is nonzero, the entire petal
is contained in $X$, since coefficients outside $X$
are zero.

For $a=1,2,3$, $c_x=0$ for $x\notin X$. Hence
\[
\begin{aligned}
L_a
&=\sum_{x\in X\cap C_a}c_xx
 =\sum_{x\in A_a\cup\{f_a\}}c_xx\\
&=c_{\xi_a}\xi_a+c_{\eta_a}\eta_a
 +\alpha_{a,1}\sum_{x\in B_{a,1}}x
 +\alpha_{a,2}\sum_{x\in B_{a,2}}x+c_{f_a}f_a\\
&=(c_{\xi_a}+c_{\eta_a}
   +\alpha_{a,1}+\alpha_{a,2})\xi_a
  +(c_{\eta_a}+c_{f_a})f_a\\
&=(c_{\eta_a}+c_{f_a})f_a.
\end{aligned}
\]
The penultimate equality uses
$\sum_{x\in B_{a,h}}x=\xi_a$ and $\eta_a=\xi_a+f_a$;
the last follows from the first equation
in~\eqref{sec3:eq-relations}.

Similarly, for $a=4,5$, the definition of $C_a$ and
the zero extension give
\[
\begin{aligned}
L_a
&=\sum_{x\in X\cap C_a}c_xx
 =\sum_{x\in D_a\cup\{f_a\}}c_xx\\
&=\delta_{a,1}\sum_{x\in D_{a,1}}x
 +\delta_{a,2}\sum_{x\in D_{a,2}}x
 +c_{f_a}f_a\\
&=(\delta_{a,1}+\delta_{a,2}+c_{f_a})f_a,
\end{aligned}
\]
where $\sum_{x\in D_{a,h}}x=f_a$.
Consequently, $L_a=\rho_af_a$, where
$\rho_a=c_{\eta_a}+c_{f_a}$ for $a=1,2,3$ and
$\rho_a=\delta_{a,1}+\delta_{a,2}+c_{f_a}$ for $a=4,5$.
In particular, $\sum_{a=1}^{5}\rho_af_a=0$.

Let $I=\{a\in\{1,\ldots,5\}:X\cap C_a\ne\varnothing\}$.
Since $X$ is nonempty, $I$ is nonempty. We distinguish
two cases.

\smallskip
\noindent\textbf{Case 1.} $|I|=1$.

Let $a$ be the unique element of $I$. Then $X\subseteq C_a$
and $L_a=0$.

\smallskip
\noindent\emph{Subcase 1.1.} $a\in\{1,2,3\}$.

Since $L_a=0$ and $f_a\ne0$, we have
$c_{\eta_a}+c_{f_a}=0$.
The first equation in~\eqref{sec3:eq-relations} also gives
$c_{\xi_a}+c_{\eta_a}+\alpha_{a,1}+\alpha_{a,2}=0$.

Suppose first that $c_{\eta_a}=0$.
Then $c_{f_a}=0$ and
$c_{\xi_a}+\alpha_{a,1}+\alpha_{a,2}=0$.
If $c_{\xi_a}\ne0$, at least one $\alpha_{a,h}$ is
nonzero, so $\{\xi_a\}\cup B_{a,h}\subseteq X$.
If $c_{\xi_a}=0$, the two $\alpha$-coefficients must be
nonzero and opposite; otherwise all coefficients would
be zero. Hence $B_{a,1}\cup B_{a,2}\subseteq X$.

Now suppose that $c_{\eta_a}\ne0$.
Then $c_{f_a}=-c_{\eta_a}\ne0$, so $\eta_a,f_a\in X$.
If $c_{\xi_a}\ne0$, we obtain
$\{f_a,\xi_a,\eta_a\}\subseteq X$.
If $c_{\xi_a}=0$, then
$\alpha_{a,1}+\alpha_{a,2}=-c_{\eta_a}\ne0$.
Thus at least one $\alpha_{a,h}$ is nonzero, giving
$\{f_a,\eta_a\}\cup B_{a,h}\subseteq X$.

\smallskip
\noindent\emph{Subcase 1.2.} $a\in\{4,5\}$.

Since $L_a=0$ and $f_a\ne0$, we have
$\delta_{a,1}+\delta_{a,2}+c_{f_a}=0$.
If $c_{f_a}=0$, the two $\delta$-coefficients must be
nonzero and opposite, since the coefficients are not
all zero. Therefore $D_{a,1}\cup D_{a,2}\subseteq X$.
If $c_{f_a}\ne0$, at least one $\delta_{a,h}$ is nonzero,
so $\{f_a\}\cup D_{a,h}\subseteq X$.

\smallskip
Thus, in either subcase, $X$ contains one of the six
types of linearly dependent sets in~\eqref{sec3:eq-dependent-sets}.
We now check which of these sets can be contained in $S_s$.

For $S_3$, none of $f_1,f_2,f_3$ is present, so the
three-element sets $\{f_a,\xi_a,\eta_a\}$ cannot occur.
Every other type has at least four elements.

For $S_4$, only $f_1$ is selected from $F$, and $\xi_1$
is absent. Hence no three-element set of the listed
types is contained in $S_4$.
The four-element sets in the list are
$\{\xi_1\}\cup B_{1,h}$ and
$\{f_a\}\cup D_{a,h}$ for $a=4,5$.
These are also excluded, respectively by the absence
of $\xi_1$ and of $f_4,f_5$. Thus every listed set
contained in $S_4$ has at least five elements.

For $S_5$, only $f_2$ is selected from $F$, and
$\xi_1,\xi_2$ are absent. A set
$\{\xi_a\}\cup B_{a,h}$ can therefore occur only for
$a=3$, when its size is six. A set
$\{f_a,\eta_a\}\cup B_{a,h}$ can occur only for $a=2$,
again with size six. The unions of two $B$-petals or
two $D$-petals have at least six elements. The other
two types cannot occur. Hence every listed set contained
in $S_5$ has at least six elements.

For $S_6$, all $\xi_a$ are absent, only $f_3$ is selected
from $F$, and $B_{1,2},D_{4,2},D_{5,2}$ are incomplete.
The only listed sets that can be contained in $S_6$ are
$B_{2,1}\cup B_{2,2}$, $B_{3,1}\cup B_{3,2}$, and
$\{f_3,\eta_3\}\cup B_{3,h}$, with sizes eight, ten,
and seven, respectively.

In every case, the listed set contained in $X$ has
at least $s+1$ elements. Hence $|X|\ge s+1$, contradicting
$|X|\le s$. This excludes Case~1.

\smallskip
\noindent\textbf{Case 2.} $|I|\ge2$.

We first show that $L_a\ne0$ for every $a\in I$.
If $L_a=0$, then $\sum_{x\in X\cap C_a}c_xx=0$ is a
linear dependence on the nonempty proper subset
$X\cap C_a$ of $X$, since every coefficient on $X$
is nonzero. This contradicts the minimality of $X$.
Thus $\rho_a\ne0$ for every $a\in I$, and
$\sum_{a\in I}\rho_af_a=0$.
Any two distinct vectors in $F$ are linearly independent
by Proposition~\ref{sec3:prop-basic}. Therefore $|I|\ge3$.

For each $a\in I$, put $n_a=|X\cap C_a|$.
We obtain lower bounds for $n_a$ in the following
three subcases.

\smallskip
\noindent\emph{Subcase 2.1.} $f_a\in S_s$.

Since $a\in I$, we have $X\cap C_a\ne\varnothing$,
and hence $n_a=|X\cap C_a|\ge1$.

\smallskip
\noindent\emph{Subcase 2.2.}
$a\in\{1,2,3\}$ and $f_a\notin S_s$.

Since $X\subseteq S_s$, we have $f_a\notin X$. Thus $c_{f_a}=0$. Since $L_a=c_{\eta_a}f_a\ne0$,
we have $c_{\eta_a}\ne0$, so $\eta_a\in X$.
Moreover,
$c_{\xi_a}+\alpha_{a,1}+\alpha_{a,2}=-c_{\eta_a}\ne0$.

If $\xi_a\in S_s$, at least one of
$c_{\xi_a},\alpha_{a,1},\alpha_{a,2}$ is nonzero.
Thus $X$ contains at least one element of $A_a$
other than $\eta_a$, giving $n_a\ge2$.

If $\xi_a\notin S_s$, then $c_{\xi_a}=0$.
At least one $\alpha_{a,h}$ is therefore nonzero, so
$X$ contains $\eta_a$ and all $t_a$ elements of
$B_{a,h}$. Hence $n_a\ge t_a+1$.

\smallskip
\noindent\emph{Subcase 2.3.}
$a\in\{4,5\}$ and $f_a\notin S_s$.

Here $c_{f_a}=0$. Since
$L_a=(\delta_{a,1}+\delta_{a,2})f_a\ne0$,
at least one $\delta_{a,h}$ is nonzero.
The corresponding three-element petal is contained
in $X$, so $n_a\ge3$.

\smallskip
These subcases give the bounds in
Table~\ref{sec3:tab-nonzero}. An entry equal to one
corresponds to $f_a\in S_s$. For $a\le3$ with
$f_a\notin S_s$, the entry is two if $\xi_a\in S_s$,
and $t_a+1$ otherwise. For $a=4,5$ with $f_a\notin S_s$,
the entry is three.

\begin{table}[htbp]
\centering
\caption{Lower bounds on $n_a$ for $a\in I$.
The final column gives the resulting lower bound on
$|X|$, using $|I|\ge3$.}
\label{sec3:tab-nonzero}
\renewcommand{\arraystretch}{1.15}
\begin{tabular}{c|rrrrr|c}
$s$ & $n_1$ & $n_2$ & $n_3$ & $n_4$ & $n_5$
    & $|X|$\\ \hline
3 & $\ge$2 & $\ge$2 & $\ge$2 & $\ge$1 & $\ge$1 & $\ge$4\\
4 & $\ge$1 & $\ge$2 & $\ge$2 & $\ge$3 & $\ge$3 & $\ge5$\\
5 & $\ge$4 & $\ge$1 & $\ge$2 & $\ge$3 & $\ge$3 & $\ge6$\\
6 & $\ge$4 & $\ge$5 & $\ge$1 & $\ge$3 & $\ge$3 & $\ge7$
\end{tabular}
\end{table}

Recall $|X|=\sum_{a\in I}n_a$.
We have $|I|\ge3$, so this sum is at least the sum of
the three smallest entries in the relevant row of
Table~\ref{sec3:tab-nonzero}. These sums are
$4,5,6,7$ for $s=3,4,5,6$, respectively.
Thus $|X|\ge s+1$, again contradicting $|X|\le s$.

Both cases are impossible. Therefore, every subset of
$S_s$ of size at most $s$ is linearly independent in $V$.
By Proposition~\ref{sec3:prop-basic}, each such subset
is independent in $P_0$. Since $|S_s|\ge s$, the set
$S_s$ is an $s$-uniform witness, as required.
\end{proof}

\subsection{Maximum witnesses}

Recall that $\beta_s(P)$ is the maximum size of
an $s$-uniform witness in $P$. A \emph{maximum $s$-uniform witness} in $P$ is an $s$-uniform witness of size $\beta_s(P)$.

\begin{proposition}\label{sec3:prop-maxima}
The maximum witness sizes in $P_0$ are
\[
\bigl(\beta_3(P_0),\beta_4(P_0),
\beta_5(P_0),\beta_6(P_0)\bigr)
=(44,42,41,37).
\]
For each $s\in\{3,4,5,6\}$, every $s$-uniform witness
$W$ of size $\beta_s(P_0)$ satisfies the corresponding
condition in Table~\ref{sec3:tab-maxima}.
Moreover, $S_3$ is the unique maximum $3$-uniform witness.
\end{proposition}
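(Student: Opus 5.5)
The lower bounds come for free: by Proposition~\ref{sec3:prop-witnesses}, $S_3,S_4,S_5,S_6$ are $s$-uniform witnesses of sizes $44,42,41,37$. So it remains to prove the matching upper bounds. For these I will combine Lemma~\ref{sec3:lem-bounds} with the dependent sets in~\eqref{sec3:eq-dependent-sets}. Since $E_0=F\sqcup G$, every $s$-uniform witness $W$ satisfies
\[
|W|=|W\cap F|+|W\cap G|\le 2+c_s,
\]
which gives $44,43,42,38$. This is already sharp for $s=3$. For $s=4,5,6$ it exceeds the target by one, so I will show that equality $|W\cap G|=c_s$ forces $|W\cap F|\le1$. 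The common mechanism is that each tight bound in Table~\ref{sec3:tab-bounds} forces certain petals to be full in $W$. A full petal, together with $\eta_a$ or $\xi_a$ or a $D$-petal, then excludes the corresponding $f_a$ through a dependent set of size at most $s$.

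\emph{Case $s=3$.} Here $|W|=44$ forces $G\subseteq W$ and $|W\cap F|=2$. Since $\{f_j,\xi_j,\eta_j\}\subseteq W$ is dependent of size three for $j=1,2,3$, none of $f_1,f_2,f_3$ lies in $W$. Hence $W\cap F=\{f_4,f_5\}$ and $W=S_3$, which also gives uniqueness.

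\emph{Case $s=4$.} If $|W\cap G|=41$, then $A_2,A_3,D_4,D_5\subseteq W$. The triples $\{f_j,\xi_j,\eta_j\}$ exclude $f_2,f_3$. The four-element sets $\{f_a\}\cup D_{a,h}$ exclude $f_4,f_5$. So $|W\cap F|\le1$.

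\emph{Case $s=5$.} If $|W\cap G|=40$, then $A_3,D_4,D_5$ are full, which excludes $f_3,f_4,f_5$ as above. Next, the proof of Lemma~\ref{sec3:lem-bounds} shows that $|W\cap A_1|\le 2t_1=6$ when $\xi_1\in W$. So $|W\cap A_1|=7$ forces $\xi_1\notin W$, $\eta_1\in W$, and both $B_{1,h}$ full. Then the five-element set $\{f_1,\eta_1\}\cup B_{1,1}$ excludes $f_1$, leaving only $f_2$.

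\emph{Case $s=6$.} The same reasoning applied to $|W\cap A_2|=9$ forces $\xi_2\notin W$, $\eta_2\in W$ and both $B_{2,h}$ full; the six-element set $\{f_2,\eta_2\}\cup B_{2,1}$ then excludes $f_2$. The condition $|W\cap D_a|=5$ forces one $D$-petal to be full, which excludes $f_a$ for $a=4,5$. For $|W\cap A_1|=6$ there are two options:
\begin{itemize}
\item if $\xi_1\in W$, then $\eta_1\in W$ and the triple $\{f_1,\xi_1,\eta_1\}$ excludes $f_1$;
\item if $\xi_1\notin W$, then $\eta_1\in W$ and five of the six elements of $B_{1,1}\cup B_{1,2}$ lie in $W$, so one $B_{1,h}$ is full and $\{f_1,\eta_1\}\cup B_{1,h}$ of size five excludes $f_1$.
\end{itemize}
Only $f_3$ survives, giving $|W|\le37$.

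For the structural conditions in Table~\ref{sec3:tab-maxima}, I will redo this equality analysis for a witness of size exactly $\beta_s(P_0)$. Since $|W|=\beta_s(P_0)$ and $|W\cap F|\le 2$, we get $|W\cap G|\ge \beta_s(P_0)-2$. Thus the total slack against $c_s$ is at most one in each row. I will enumerate where the (at most one) missing unit can occur, namely at $W\cap F$ or at one of the five blocks. In each configuration I will record which $\xi_j,\eta_j$, petals and $f_a$ must be present, using the same exclusions as above. This enumeration is the main obstacle: it is where the precise table entries must be matched. Each configuration is a short local check with the same six dependent sets, but there are several configurations per row.
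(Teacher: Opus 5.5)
Your derivation of the values $\beta_s(P_0)$ is correct, and it takes a different route from the paper's. Instead of splitting on $q=|W\cap F|$, you show that $|W\cap G|=c_s$ forces $W\cap F\subseteq\{f_1\}$, $\{f_2\}$, $\{f_3\}$ for $s=4,5,6$, using only the dependent sets in~\eqref{sec3:eq-dependent-sets}. Together with $|W\cap F|\le2$, this gives $|W|\le c_s+1$. The case $s=3$ and the uniqueness of $S_3$ are also fine.

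The gap is in the assertion about Table~\ref{sec3:tab-maxima} for $s=4,5,6$, which is exactly what Corollary~\ref{sec3:cor-deletion} relies on. A witness with $|W\cap F|=2$ and $|W\cap G|=c_s-1$ also has size $c_s+1=\beta_s(P_0)$. Your plan to exclude such configurations with ``the same exclusions as above'' cannot succeed.

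For $s=4$, take $W=\{f_1,f_2\}\cup(G\setminus\{\xi_1,\xi_2\})$. It has $42$ elements and contains no set from~\eqref{sec3:eq-dependent-sets} with at most four elements:
\begin{itemize}
\item the triples $\{f_j,\xi_j,\eta_j\}$ are blocked by the absence of $\xi_1,\xi_2,f_3$;
\item the sets $\{f_a\}\cup D_{a,h}$ are blocked by the absence of $f_4,f_5$;
\item the sets $\{\xi_1\}\cup B_{1,h}$ are blocked by the absence of $\xi_1$;
\item every remaining type has at least five elements.
\end{itemize}
This $W$ fails to be a $4$-uniform witness only because $\{f_1,f_2,\xi_3,\eta_3\}$ is dependent, since $\eta_3-\xi_3=f_3=f_1+f_2$. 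Similar configurations arise for $s=5,6$; for instance, $\{f_1,f_2\}\cup(G\setminus\{\xi_1,\eta_1,\xi_2\})$ has $41$ elements and avoids every listed set of size at most five.

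The missing idea is that any two elements of $F$ span $H_0$, which contains every $f_a$. Hence $\{f_p,f_r,\xi_j,\eta_j\}$ and $\{f_p,f_r\}\cup D_{a,h}$ are dependent sets of sizes four and five, and neither appears in the list. This is how the paper handles $q=2$:
\begin{itemize}
\item for $s=4$, at least one of $\xi_j,\eta_j$ is missing for each $j$, so $|W|\le 2+7+9+11+6+6=41$;
\item for $s=5,6$, every $D$-petal is incomplete, so $|W|\le 38$ and $|W|\le 36$, respectively.
\end{itemize}

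Once $q=2$ is excluded in this way, your own argument completes the table. A maximum witness then has $q=1$ and $|W\cap G|=c_s$, and you have already shown that this forces $W\cap F$ to be the required singleton. This handles the case $q=1$ more economically than the paper's separate subcases for each $f_a$.
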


\begin{table}[htbp]
\centering
\caption{The intersection $W\cap F$ for every maximum
$s$-uniform witness $W$.}
\label{sec3:tab-maxima}
\renewcommand{\arraystretch}{1.15}
\begin{tabular}{c|cccc}
$s$ & $3$ & $4$ & $5$ & $6$\\ \hline
$W\cap F$
& $\{f_4,f_5\}$
& $\{f_1\}$
& $\{f_2\}$
& $\{f_3\}$
\end{tabular}
\end{table}

\begin{proof}
Fix $s\in\{3,4,5,6\}$, let $W$ be an $s$-uniform witness,
and put $q=|W\cap F|$. By Lemma~\ref{sec3:lem-bounds},
$q\le2$. The disjoint decomposition of $E_0$ gives
\[
|W|
=q+\sum_{j=1}^{3}|W\cap A_j|
  +\sum_{a=4}^{5}|W\cap D_a|
\le q+c_s.
\]
Proposition~\ref{sec3:prop-witnesses} gives an $s$-uniform
witness $S_s$, so $\beta_s(P_0)\ge |S_s|$, where
$(|S_3|,|S_4|,|S_5|,|S_6|)=(44,42,41,37)$.

We now distinguish four cases according to $s$.

\smallskip
\noindent\textbf{Case 1.} $s=3$.

The bound $|W|\le q+c_3\le2+42=44$, together with
the witness $S_3$, gives $\beta_3(P_0)=44$.
Suppose that $|W|=44$. Equality requires $q=2$ and
$|W\cap G|=42$. Since $|G|=42$, we have $G\subseteq W$.
Thus, for each $j=1,2,3$, both $\xi_j$ and $\eta_j$ belong to $W$.
The dependent sets $\{f_j,\xi_j,\eta_j\}$ therefore
exclude $f_1,f_2,f_3$ from $W$. The two elements of
$W\cap F$ must be $f_4,f_5$. Hence
$W=G\cup\{f_4,f_5\}=S_3$, proving uniqueness.

\smallskip
\noindent\textbf{Case 2.} $s=4$.

If $q=0$, then $|W|\le c_4=41<42\le\beta_4(P_0)$,
so $W$ cannot be a maximum $4$-uniform witness.
If $q=2$, write $W\cap F=\{f_p,f_r\}$ with $p\ne r$.
Since $f_p,f_r$ span $H_0$, the equality
$\eta_j-\xi_j=f_j\in H_0$ implies that
$\{f_p,f_r,\xi_j,\eta_j\}$ is dependent for $j=1,2,3$.
Thus at least one of $\xi_j,\eta_j$ is absent from $W$.
For $j=2,3$, this improves the bounds in
Table~\ref{sec3:tab-bounds} to
$|W\cap A_2|\le9$ and $|W\cap A_3|\le11$.
Consequently, by Table~\ref{sec3:tab-bounds},
\[
|W|\le2+7+9+11+6+6=41.
\]
If $q=1$, then $|W|\le c_4+1=42$.
Since $S_4$ has size $42$, we obtain
$\beta_4(P_0)=42$, and every maximum witness has $q=1$.

Let $W$ be a maximum $4$-uniform witness.
We determine the unique element of $W\cap F$.

\smallskip
\noindent\emph{Subcase 2.1.}
$W\cap F=\{f_j\}$ for some $j\in\{2,3\}$.

Since $\{f_j,\xi_j,\eta_j\}$ is dependent,
at least one of $\xi_j,\eta_j$ is absent from $W$.
Thus $|W\cap A_j|\le |A_j|-1$.
As the bound for $A_j$ in
Table~\ref{sec3:tab-bounds} is $|A_j|$,
we obtain $|W|\le1+(c_4-1)=41<42$,
a contradiction.

\smallskip
\noindent\emph{Subcase 2.2.}
$W\cap F=\{f_a\}$ for some $a\in\{4,5\}$.

Since $\{f_a\}\cup D_{a,h}$ is a dependent
four-element set for each $h=1,2$, neither
$D_{a,1}$ nor $D_{a,2}$ is contained in $W$.
Each petal has three elements, so
$|W\cap D_a|\le4$.
Since the bound for $D_a$ in
Table~\ref{sec3:tab-bounds} is six,
we obtain $|W|\le1+(c_4-2)=40<42$,
again a contradiction.

\smallskip
Therefore every maximum $4$-uniform witness
satisfies $W\cap F=\{f_1\}$.

\smallskip
\noindent\textbf{Case 3.} $s=5$.

If $q=0$, then $|W|\le c_5=40<41\le\beta_5(P_0)$, so $W$ cannot be a maximum $5$-uniform witness.
If $q=2$, write $W\cap F=\{f_p,f_r\}$ with $p\ne r$.
Since $f_p,f_r$ span $H_0$ and
$\sum_{x\in D_{a,h}}x=f_a\in H_0$,
the five-element set $\{f_p,f_r\}\cup D_{a,h}$
is dependent for each $a\in\{4,5\}$ and $h\in\{1,2\}$.
Thus no $D$-petal is contained in $W$.
As each petal has three elements,
$|W\cap D_4|,|W\cap D_5|\le4$. By Table~\ref{sec3:tab-bounds},
\[
|W|\le2+7+9+12+4+4=38.
\]
If $q=1$, then $|W|\le c_5+1=41$.
Since $S_5$ has size $41$, we obtain
$\beta_5(P_0)=41$, and every maximum witness has $q=1$.

Let $W$ be a maximum $5$-uniform witness.
We determine the unique element of $W\cap F$.

\smallskip
\noindent\emph{Subcase 3.1.} $W\cap F=\{f_1\}$.

By Table~\ref{sec3:tab-bounds}, $|W\cap A_1|\le7$.
Suppose that equality holds.
If $\xi_1\in W$, the dependent four-element sets
$\{\xi_1\}\cup B_{1,h}$ force both three-element
petals $B_{1,1}$ and $B_{1,2}$ to be incomplete. Hence
$|W\cap A_1|\le2+2+2=6$, a contradiction.
Thus $\xi_1\notin W$.

Since $A_1\setminus\{\xi_1\}$ consists of $\eta_1$
and the two three-element petals $B_{1,1}$ and $B_{1,2}$, it has exactly
seven elements. Equality would therefore require
$\eta_1\in W$ and $B_{1,1},B_{1,2}\subseteq W$.
Together with $f_1\in W$, this places the dependent
five-element set $\{f_1,\eta_1\}\cup B_{1,h}$ in $W$,
again a contradiction.

Therefore $|W\cap A_1|\le6$.
Combining this with the other bounds in
Table~\ref{sec3:tab-bounds} gives
$|W|\le1+6+9+12+6+6=40<41$.

\smallskip
\noindent\emph{Subcase 3.2.} $W\cap F=\{f_3\}$.

Since $\{f_3,\xi_3,\eta_3\}$ is dependent,
at least one of $\xi_3,\eta_3$ is absent from $W$.
Thus $|W\cap A_3|\le11$, and
$|W|\le1+7+9+11+6+6=40<41$,
a contradiction.

\smallskip
\noindent\emph{Subcase 3.3.}
$W\cap F=\{f_a\}$ for some $a\in\{4,5\}$.

For each $h=1,2$, the four-element set
$\{f_a\}\cup D_{a,h}$ is dependent.
Since $f_a\in W$, neither $D_{a,1}$ nor
$D_{a,2}$ is contained in $W$.
Thus $|W\cap D_a|\le4$, instead of the
bound six in Table~\ref{sec3:tab-bounds}.
Consequently, $|W|\le1+(c_5-2)=39<41$,
a contradiction.

\smallskip
Therefore every maximum $5$-uniform witness
satisfies $W\cap F=\{f_2\}$.

\smallskip
\noindent\textbf{Case 4.} $s=6$.

If $q=0$, then $|W|\le c_6=36<37\le\beta_6(P_0)$.
If $q=2$, write $W\cap F=\{f_p,f_r\}$ with $p\ne r$.
Since the five-element set $\{f_p,f_r\}\cup D_{a,h}$
is dependent for $a=4,5$ and $h=1,2$, every $D$-petal is incomplete in $W$, giving
$|W\cap D_4|,|W\cap D_5|\le4$.
Consequently, by Table~\ref{sec3:tab-bounds},
\[
|W|\le2+6+9+11+4+4=36.
\]
If $q=1$, then $|W|\le c_6+1=37$.
Since $S_6$ has size $37$, we obtain
$\beta_6(P_0)=37$, and every maximum witness has $q=1$.

Let $W$ be a maximum $6$-uniform witness.
We determine the unique element of $W\cap F$.

\smallskip
\noindent\emph{Subcase 4.1.} $W\cap F=\{f_1\}$.

We consider three possibilities for $\xi_1$ and $\eta_1$.

If $\xi_1\in W$, the dependent set
$\{f_1,\xi_1,\eta_1\}$ implies $\eta_1\notin W$.
Moreover, the dependent sets
$\{\xi_1\}\cup B_{1,h}$ force both three-element
petals $B_{1,1}$ and $B_{1,2}$ to be incomplete. Hence
$|W\cap A_1|\le1+2+2=5$.

If $\xi_1\notin W$ but $\eta_1\in W$,
the dependent five-element sets
$\{f_1,\eta_1\}\cup B_{1,h}$ force both $B_{1,1}$ and $B_{1,2}$
to be incomplete. Thus
$|W\cap A_1|\le1+2+2=5$.

If neither $\xi_1$ nor $\eta_1$ belongs to $W$,
then $W\cap A_1\subseteq B_{1,1}\cup B_{1,2}$.
Since $B_{1,1}\cup B_{1,2}$ is a dependent
six-element set, $W$ contains at most five
of its elements. Hence $|W\cap A_1|\le5$.

In all three possibilities, $|W\cap A_1|\le5$.
Therefore $|W|\le1+5+9+11+5+5=36<37$,
a contradiction.

\smallskip
\noindent\emph{Subcase 4.2.} $W\cap F=\{f_2\}$.

By Table~\ref{sec3:tab-bounds}, $|W\cap A_2|\le9$.
Suppose that equality holds.
If $\xi_2\in W$, the dependent five-element sets
$\{\xi_2\}\cup B_{2,h}$ force both four-element
petals $B_{2,1}$ and $B_{2,2}$ to be incomplete. Thus
$|W\cap A_2|\le2+3+3=8$, a contradiction.
Hence $\xi_2\notin W$.

The set $A_2\setminus\{\xi_2\}$ consists of $\eta_2$
and the two four-element petals $B_{2,1}$ and $B_{2,2}$, so it has nine elements.
Equality would therefore require $\eta_2\in W$ and
$B_{2,1},B_{2,2}\subseteq W$.
Together with $f_2\in W$, this places the dependent
six-element set $\{f_2,\eta_2\}\cup B_{2,h}$ in $W$,
again a contradiction.

Therefore $|W\cap A_2|\le8$, giving
$|W|\le1+6+8+11+5+5=36<37$.

\smallskip
\noindent\emph{Subcase 4.3.}
$W\cap F=\{f_a\}$ for some $a\in\{4,5\}$.

For each $h=1,2$, the four-element set
$\{f_a\}\cup D_{a,h}$ is dependent.
Since $f_a\in W$, both $D_{a,1}$ and
$D_{a,2}$ must be incomplete.
Thus $|W\cap D_a|\le4$, instead of the
bound five in Table~\ref{sec3:tab-bounds}.
Consequently, $|W|\le1+(c_6-1)=36<37$,
a contradiction.

\smallskip
Therefore every maximum $6$-uniform witness
satisfies $W\cap F=\{f_3\}$.
This completes the proof.
\end{proof}

\begin{corollary}\label{sec3:cor-deletion}
For $e\in E(P_0)$, define
\[
 s(e)=
 \begin{cases}
  3,&e\in S_3,\\
  4,&e=f_1,\\
  5,&e=f_2,\\
  6,&e=f_3.
 \end{cases}
\]
Then
\begin{equation}\label{sec3:eq-deletion}
 \beta_{s(e)}(P_0\setminus e)=\beta_{s(e)}(P_0)-1.
\end{equation}
\end{corollary}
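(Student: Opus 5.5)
First note that $s(e)$ is well defined on all of $E_0$: the ground set is $E_0=F\sqcup G$, and $S_3=G\cup\{f_4,f_5\}$, so $E_0\setminus S_3=\{f_1,f_2,f_3\}$. The plan is to prove two inequalities and combine them.

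\emph{Lower bound.} Deleting a single element can reduce the maximum witness size by at most one. Indeed, if $W$ is an $s$-uniform witness in $P_0$, then every subset of $W\setminus\{e\}$ of size at most $s$ is still independent in $P_0\setminus e$, since deletion preserves independence. We also need $|W\setminus\{e\}|\ge s$, and this holds because every maximum witness has at least $37$ elements. Taking $W$ to be a maximum witness from Proposition~\ref{sec3:prop-maxima} gives $\beta_s(P_0\setminus e)\ge\beta_s(P_0)-1$ for every $s\in\{3,4,5,6\}$ and every $e$.

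\emph{Upper bound.} Any $s$-uniform witness in $P_0\setminus e$ is an $s$-uniform witness in $P_0$ that avoids $e$, because independence in a restriction coincides with independence in $P_0$. So it suffices to show that no maximum $s(e)$-uniform witness of $P_0$ avoids $e$. Equivalently, $e$ must lie in every maximum $s(e)$-uniform witness. Then every witness avoiding $e$ has size at most $\beta_{s(e)}(P_0)-1$.
\begin{itemize}
\item If $e\in S_3$, the uniqueness statement in Proposition~\ref{sec3:prop-maxima} says that $S_3$ is the only maximum $3$-uniform witness, and it contains $e$.
\item If $e=f_1$, Table~\ref{sec3:tab-maxima} shows that every maximum $4$-uniform witness $W$ satisfies $W\cap F=\{f_1\}$, so $f_1\in W$.
\item If $e=f_2$ or $e=f_3$, the same table gives $f_2$, respectively $f_3$, in every maximum $5$-, respectively $6$-uniform witness.
\end{itemize}

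Combining the two inequalities yields \eqref{sec3:eq-deletion}. There is no real obstacle: the substantive work is already contained in Proposition~\ref{sec3:prop-maxima}. The only point needing care is the size condition $|W|\ge s$ in the lower bound, which is immediate here.
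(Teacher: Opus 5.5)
Your proposal is correct and follows essentially the same argument as the paper. The upper bound comes from Proposition~\ref{sec3:prop-maxima} forcing $e$ into every maximum $s(e)$-uniform witness, and the lower bound comes from removing $e$ from a maximum witness, using $\beta_s(P_0)\ge 37$ to keep the size condition $|W\setminus\{e\}|\ge s$.
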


\begin{proof}
By Proposition~\ref{sec3:prop-maxima}, every maximum
$s(e)$-uniform witness contains $e$.
Fix $e$, and put $s=s(e)$, $b=\beta_s(P_0)$, and $Q=P_0\setminus e$.
Deletion preserves independence on all subsets of $E_0\setminus\{e\}$.
Hence any $s$-uniform witness $W'$ in $Q$ is also an $s$-uniform
witness in $P_0$ and has size at most $b$. Since every maximum
$s(e)$-uniform witness contains $e$, we have $\beta_s(Q)\le b-1.$

Conversely, choose a maximum $s$-uniform witness $W$ in $P_0$.
It contains $e$, so $|W\setminus\{e\}|=b-1$.
Every subset $Y\subseteq W\setminus\{e\}$ with $|Y|\le s$ is
independent in $P_0$. It is therefore
independent in $Q$ as well. Proposition~\ref{sec3:prop-maxima}
gives $b\ge37$. Since $s\le6$, we have $b-1\ge36\ge s$.
Thus $W\setminus\{e\}$ is an
$s$-uniform witness in $Q$, and $\beta_s(Q)\ge b-1$.
Combining the two inequalities proves \eqref{sec3:eq-deletion}.
\end{proof}

\section{Proof of Theorem~\ref{sec1:main}}
\label{sec4:counterexample}

\begin{proof}[Proof of Theorem~\ref{sec1:main}]
By Proposition~\ref{sec3:prop-basic}, $P_0$ is a simple
matroid of rank six, and $F$ is a rank-two flat.
Proposition~\ref{sec3:prop-witnesses} gives a
$6$-uniform witness $S_6$ with $|S_6|=37>3(6-1)$.
Thus Lemma~\ref{sec2:lem-large-witness} implies that
$\mathcal T$ is a tangle of order seven with
$(P_0)_{\mathcal T}=P_0$, and for $4\le i\le7$, 
\[
\operatorname{breadth}_{P_0}(\mathcal T_i)
=\beta_{i-1}(P_0).
\]

Suppose that a minor $N$ of $P_0$ admits an order-$i$ tangle $\mathcal{S}_i$ for each $i \in \{4,5,6,7\}$ such that
\[
\operatorname{breadth}_{N}(\mathcal{S}_i) = \operatorname{breadth}_{P_0}(\mathcal{T}_i).
\]
Assume, for a contradiction, that $N$ is a proper minor of $P_0$. Choose $e \in E(P_0) \setminus E(N)$ and put $s = s(e)$, as defined in Corollary 3.7. Then $s+1 \in \{4,5,6,7\}$, and $P_0|E(N)$ is a restriction of $P_0 \setminus e$. By Corollary 2.4,
\[
\begin{aligned}
\beta_s(P_0) &= \operatorname{breadth}_{P_0}(\mathcal{T}_{s+1}) = \operatorname{breadth}_{N}(\mathcal{S}_{s+1}) \\
&\le \beta_s(P_0|E(N)) \le \beta_s(P_0 \setminus e) \\
&= \beta_s(P_0) - 1,
\end{aligned}
\]
where the first equality follows from Lemma 2.5 and the last from Corollary 3.7. This contradiction shows that $N = P_0$.

Finally, we show $P_0$ is not
weakly $4$-connected. Since $S_6$ is
a $6$-uniform witness, $S_6\setminus\{f_3\}\subseteq G$ contains
an independent six-element set.
As $r(P_0)=6$, it follows that $r_{P_0}(G)=6$.
Since $r_{P_0}(F)=2$ and $E(P_0)=F\sqcup G$,
\[
\lambda_{P_0}(F)=2+6-6=2.
\]
But $|F|=5>4$ and $|G|=42>4$, so $P_0$ is not
weakly $4$-connected.

Therefore Conjecture~\ref{sec1:conjecture} is false.
\end{proof}

\section{Internally \texorpdfstring{$4$}{4}-connected minors}
\label{sec5:internal}

We prove Theorem~\ref{sec1:main-internal} using the following
construction.

\begin{construction}\label{sec5:construction}
Let $P_1$ be the vector matroid represented over $\mathbb Q$ by
\begin{equation}\label{sec5:eq-matrix}
\bordermatrix{
& {e_{12}}
& {e_{13}}
& {e_{14}}
& {e_{23}}
& {e_{24}}
& {e_{34}}
& {p_1}
& {p_2}
& {p_3}
& {p_4}
\cr
& 0&0&0&1&1&1&0&1&1&1\cr
& 0&1&1&0&0&-1&1&0&3&2\cr
& 1&0&-1&0&-1&0&1&2&0&-3\cr
}
\end{equation}
Let $E_1$ be the set of column labels, so that
$E(P_1)=E_1$.
Put $Z=\{p_1,p_2,p_3,p_4\}$ and define
\begin{equation*}\label{sec5:eq-lines}
\begin{aligned}
L_1&=\{p_1,e_{12},e_{13},e_{14}\},&
L_2&=\{p_2,e_{12},e_{23},e_{24}\},\\
L_3&=\{p_3,e_{13},e_{23},e_{34}\},&
L_4&=\{p_4,e_{14},e_{24},e_{34}\}.
\end{aligned}
\end{equation*}
\end{construction}

\begin{proof}[Proof of Theorem~\ref{sec1:main-internal}]
Every column in~\eqref{sec5:eq-matrix} is nonzero, and
any two distinct columns are linearly independent.
The columns labelled $e_{12},e_{13},e_{23}$ are linearly
independent, so $P_1$ is simple of rank three.

A direct calculation shows that $L_1,L_2,L_3,L_4$
are lines, and all other lines have two elements. Moreover,
$L_i\cap L_j=\{e_{ij}\}$ for $1\le i<j\le4$. Hence any three lines
cover at most nine elements of $E_1$. By~\cite[Lemma~76]{BJO25},
\[
\mathcal T=\{A\subseteq E_1:r_{P_1}(A)<3\}
\]
is the unique tangle of order four in $P_1$, and
$(P_1)_{\mathcal T}=P_1$.

To compute its breadth, let $W$ be a $3$-uniform witness in
$P_1$. Since each $L_i$ has rank two, $|W\cap L_i|\le2$.
Each $e_{ij}$ belongs to two of the four lines, and each $p_i$
belongs to one. Hence
\[
8\ge\sum_{i=1}^{4}|W\cap L_i|=2|W\setminus Z|+|W\cap Z|=2|W|-|W\cap Z|.
\]
As $|W\cap Z|\le4$, we obtain $|W|\le6$.
Let $U=Z\cup\{e_{12},e_{34}\}$.
Then $|U|=6$ and $|U\cap L_i|=2$ for each
$i\in\{1,2,3,4\}$. Since every dependent three-element subset of $E_1$
is contained in one of $L_1,L_2,L_3,L_4$, every three-element subset of $U$ is independent.
Thus $U$ is a $3$-uniform witness, and $\operatorname{breadth}_{P_1}(\mathcal T)=\beta_3(P_1)=6$.

It remains to show that no internally $4$-connected minor of
$P_1$ admits a tangle of order four.
First, $P_1$ itself is not internally $4$-connected.
We have $r_{P_1}(L_1)=2$, $r_{P_1}(E_1\setminus L_1)=3$ and
$\lambda_{P_1}(L_1)=2$.
Since $|L_1|=4$ and $|E_1\setminus L_1|=6$, thus
$P_1$ is not internally $4$-connected.

Now let $N=(P_1/K)\setminus D$ be a minor with a tangle
$\mathcal S$ of order four, where $K\cap D=\varnothing$.
Lemma~\ref{sec2:lem-rank-comparison} gives $r(N)\ge3$.
If $K\ne\varnothing$, simplicity of $P_1$ gives
$r_{P_1}(K)\ge1$, and therefore
$r(N)\le r(P_1/K)=3-r_{P_1}(K)\le2$, a contradiction.
Thus $K=\varnothing$ and $N=P_1\setminus D$.

Suppose that $D\ne\varnothing$, and choose $x\in D$.
If $x=p_i$, the three lines $L_j$ with $j\ne i$
cover $E_1\setminus\{x\}$.
If $x=e_{ij}$, let
$\{k,l\}=\{1,2,3,4\}\setminus\{i,j\}$. Then $E_1\setminus\{x\}=L_k\cup L_l\cup\{p_i,p_j\}.$
In either case, there are sets $H_1,H_2,H_3\subseteq E_1$,
each of rank at most two in $P_1$, such that
\[
E_1\setminus\{x\}=H_1\cup H_2\cup H_3.
\]
For $h=1,2,3$, put $X_h=H_h\cap E(N)$.
Since $x\in D$ and $N=P_1\setminus D$, we have $E(N)=X_1\cup X_2\cup X_3,$ and $r_N(X_h)\le r_{P_1}(H_h)\le2.$
By Lemma~\ref{sec2:lem-rank-comparison},
$X_1,X_2,X_3\in\mathcal S$, contrary to axiom~(T3).
Thus $D=\varnothing$ and $N=P_1$.

Since $P_1$ is not internally $4$-connected, no internally
$4$-connected minor of $P_1$ admits a tangle of order four.
Consequently, Conjecture~\ref{sec1:conjecture-internal} is false.
\end{proof}


\begin{thebibliography}{10}

\bibitem{BJO25}
N.~Brettell, S.~Jowett, J.~Oxley, C.~Semple, and G.~Whittle,
\emph{What is a $4$-connected matroid?},
Electron. J. Combin. \textbf{32} (2025), no.~2, Paper~P2.1.
\doi{10.37236/12467}.

\bibitem{CK25}
J.~Carmesin and J.~Kurkofka,
\emph{Characterising $4$-tangles through a connectivity property},
Electron. J. Combin. \textbf{32} (2025), no.~3, Paper~P3.26.
\doi{10.37236/12367}.

\bibitem{Dha96}
J.~S.~Dharmatilake,
\emph{A min-max theorem using matroid separations},
in \emph{Matroid Theory} (Seattle, WA, 1995),
J.~E.~Bonin, J.~G.~Oxley, and B.~Servatius (eds.),
Contemp. Math., vol.~197, American Mathematical Society,
Providence, RI, 1996, pp.~333--342.

\bibitem{GGRW06}
J.~Geelen, B.~Gerards, N.~Robertson, and G.~Whittle,
\emph{Obstructions to branch-decomposition of matroids},
J. Combin. Theory Ser. B \textbf{96} (2006), no.~4, 560--570.
\doi{10.1016/j.jctb.2005.11.001}.

\bibitem{GGW09}
J.~Geelen, B.~Gerards, and G.~Whittle,
\emph{Tangles, tree-decompositions and grids in matroids},
J. Combin. Theory Ser. B \textbf{99} (2009), no.~4, 657--667.
\doi{10.1016/j.jctb.2007.10.008}.

\bibitem{GvZ15}
J.~Geelen and S.~H.~M.~van Zwam,
\emph{Matroid $3$-connectivity and branch width},
J. Combin. Theory Ser. B \textbf{112} (2015), 104--123.
\doi{10.1016/j.jctb.2014.12.003}.

\bibitem{Hall15}
D.~Hall,
\emph{A characterization of tangle matroids},
Ann. Comb. \textbf{19} (2015), no.~1, 125--130.
\doi{10.1007/s00026-015-0259-8}.

\bibitem{Oxley11}
J.~Oxley,
\emph{Matroid Theory}, 2nd ed.,
Oxford Graduate Texts in Mathematics, vol.~21,
Oxford University Press, Oxford, 2011.
\doi{10.1093/acprof:oso/9780198566946.001.0001}.

\bibitem{RS91}
N.~Robertson and P.~D.~Seymour,
\emph{Graph minors. X. Obstructions to tree-decomposition},
J. Combin. Theory Ser. B \textbf{52} (1991), no.~2, 153--190.
\doi{10.1016/0095-8956(91)90061-N}.

\bibitem{Tutte66}
W.~T.~Tutte,
\emph{Connectivity in matroids},
Canad. J. Math. \textbf{18} (1966), 1301--1324.
\doi{10.4153/CJM-1966-129-2}.

\end{thebibliography}
\end{document}